\newcommand{\Z}{\mathbb{Z}}
\newcommand{\Q}{\mathbb{Q}}
\newcommand{\R}{\mathbb{R}}
\newcommand{\N}{\mathbb{N}}
\newcommand{\8}{\infty}
\newcommand{\I}{\mathcal{I}}
\newcommand{\qq}{\Q\times\Q}
\newcommand{\0}{\mathcal{O}}
\newcommand{\Ga}{\mathbb{\Gamma}}
\newcommand{\h}{\mathrm{Homeo}_{+}{(\R)}}
\theoremstyle{definition}
\newtheorem{question}{Question}
\theoremstyle{definition}
\newtheorem{definition}{Definition}
\newtheorem{theorem}{Theorem}
\newtheorem{lemma}[theorem]{Lemma}
\newtheorem{proposition}{Proposition}
\newtheorem{corollary}[theorem]{Corollary}
\theoremstyle{remark}
\newtheorem{remark}{Remark}
\begin{document}
\theoremstyle{plain}
\newtheorem{conjecture}[theorem]{Conjecture}
\newtheorem{hypothesis}[theorem]{Hypothesis}
\newtheorem{condition}[theorem]{Condition}
\newtheorem{fact}[theorem]{Fact}
\newtheorem{problem}[theorem]{Problem}

\theoremstyle{definition}
\newtheorem{notation}[theorem]{Notation}

\theoremstyle{remark}

\title{On the topology of the space of bi-orderings of a free group on two generators}

\author{Serhii Dovhyi and Kyrylo Muliarchyk}

\date{}

\maketitle

\begin{abstract}
   Let $G$ be a group. We can topologize the spaces of left-orderings $LO(G)$ and bi-orderings $O(G)$ of $G$ with the product topology. These spaces may or may not have isolated points. It is known that $LO(F_n)$ has no isolated points, where $F_n$ is a free group on $n\geq 2$ generators. In this paper, we show that $O(F_n)$ has no isolated points as well, thereby resolving the second part of \cite[Conjecture 2.2]{Sikora}.
\end{abstract}

\section{Introduction}

Given a group $G$, a linear order $<$ is a left order if it is invariant under left multiplication, i.e., $x<y$ implies $zx<zy$ for all $x,y,z\in G$. A group that admits a left order is called  \textbf{left-orderable}.

\label{sec:1}

Elements that are bigger or smaller than the identity element of a group are called \textbf{positive} and \textbf{negative} respectively.

Another way to define left-orderability is as follows:

\begin{proposition}
\label{positive cone}
A group $G$ is left-orderable if and only if there exists a subset $P\subset G$ such that
\begin{enumerate}
\item $P\cdot P\subset P;$
\item for every $g\in G$, exactly one of $g=1$, $g\in P$ or $g^{-1}\in P$ holds.
\end{enumerate}
\end{proposition}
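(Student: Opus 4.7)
The plan is to prove both implications by exhibiting an explicit dictionary between left orders $<$ on $G$ and subsets $P \subset G$ satisfying the two listed conditions. In one direction, given a left order I will define $P$ to be the set of positive elements; in the other direction, given such a $P$ I will declare $x < y$ to mean $x^{-1}y \in P$, and check that this is a left-invariant linear order.

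For the forward direction, I would assume $<$ is a left order and set $P = \{g \in G : g > 1\}$. Property~(2) is immediate from the trichotomy of the linear order $<$ together with the observation that $g < 1$ is equivalent to $1 < g^{-1}$ after left-multiplying both sides by $g^{-1}$. For Property~(1), if $x, y \in P$ then $y > 1$, so left-multiplying by $x$ gives $xy > x > 1$, hence $xy \in P$.

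For the backward direction, given $P$ as in the statement, I would define $x < y \iff x^{-1}y \in P$. Trichotomy on the triple $\{x = y,\ x < y,\ y < x\}$ corresponds, via the element $x^{-1}y$, exactly to Condition~(2). Transitivity follows from Condition~(1): if $x^{-1}y$ and $y^{-1}z$ both lie in $P$, so does their product $x^{-1}z$. Irreflexivity follows because $1 \notin P$ by (2). Finally, left-invariance is automatic: $(zx)^{-1}(zy) = x^{-1}y$, so $x < y$ implies $zx < zy$ for any $z$.

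There is no real obstacle here; the proposition is essentially a bookkeeping exercise showing that the data of a left order and the data of a positive cone are equivalent. The only step that requires a tiny bit of care is verifying that $g < 1$ forces $g^{-1} > 1$ in the forward direction, which uses left-invariance applied to $g^{-1}$. I would write the proof compactly, presenting the two assignments $< \mapsto P$ and $P \mapsto <$ and verifying the required properties in turn.
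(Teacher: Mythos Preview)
Your proof is correct and follows exactly the approach indicated in the paper: the paper does not give a formal proof of this proposition but simply records the dictionary $P_< := \{g \in G \mid g > 1\}$ and $x <_P y \iff x^{-1}y \in P$, which is precisely the correspondence you verify in detail. There is nothing to add.
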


Such a subset $P$ is called a \textbf{positive cone}.

For a given order $<$ on a group $G$, the positive cone $P_<$ associated with this order is defined by $P_<\coloneqq \{g\in G\mid g>1\}$.
For a given positive cone $P\subset G$ the associated order $<_P$ is defined by $x<_P y$ if $x^{-1}y\in P$.

A group that admits a linear ordering which is invariant under both left and right multiplication is called \textbf{bi-orderable} or just \textbf{orderable}.

\begin{proposition}
\label{positive cone for bi-orderability}
A group $G$ is orderable if and only if it admits a subset $P$ satisfying the conditions $1$ and $2$ in Proposition \ref{positive cone}, and in addition, the condition

3. $gPg^{-1}\subset P$ for all $g\in G$.
\end{proposition}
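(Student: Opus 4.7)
The plan is to prove both implications separately, piggybacking on Proposition \ref{positive cone} so that conditions 1 and 2 are handled for free in each direction. The only genuinely new content is showing that right-invariance of the order corresponds exactly to the conjugation-invariance condition 3 on the positive cone.

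For the forward direction, I would start with a bi-invariant order $<$ on $G$ and take $P = P_<$ as in Proposition \ref{positive cone}, which automatically satisfies 1 and 2. To verify 3, I would pick $p \in P$ (so $1 < p$) and an arbitrary $g \in G$, then apply left-invariance to get $g < gp$ and right-invariance to get $1 = g g^{-1} < gp g^{-1}$, so $gpg^{-1} \in P$.

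For the converse, I would take the order $<_P$ associated to $P$ via Proposition \ref{positive cone}; this is left-invariant by construction, so it remains to prove right-invariance. Given $x <_P y$, i.e.\ $x^{-1}y \in P$, I want $(xz)^{-1}(yz) \in P$. Rewriting this as $z^{-1}(x^{-1}y)z$ and setting $p = x^{-1}y$, the claim reduces to $z^{-1}pz \in P$, which is exactly condition 3 applied with $g := z^{-1}$.

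The proof is essentially routine; the only thing to watch is the convention in condition 3 (it is written as $gPg^{-1} \subset P$ rather than $g^{-1}Pg \subset P$), so in the converse I must invoke it with $g := z^{-1}$ rather than $g := z$. Since the condition is quantified over all $g \in G$, this is pure bookkeeping and poses no real obstacle.
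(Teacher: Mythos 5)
Your proof is correct and complete; it is the standard argument for this well-known fact, which the paper states without proof. Both directions, including the careful handling of the $g := z^{-1}$ substitution in condition 3, check out.
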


\begin{proposition}
\label{family of left-orderable groups is closed under}
The family of orderable groups is closed under the following: taking subgroups, direct products, free products (first proved in \cite{Vin}), quotients by normal convex subgroups.

Moreover, orders on $G_1\times G_2$ and $G_1\ast G_2$ can be taken as the extensions of orders on $G_1$ and $G_2$. The order on $^{G}\!/\!_{N}$ could be defined as follows: $gN$ is positive in $^{G}\!/\!_{N}$ if $g$ is positive in $G$ and $g\notin N$.
\end{proposition}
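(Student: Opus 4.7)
The plan is to establish each closure property by exhibiting a positive cone on the target group and verifying the three conditions of Proposition \ref{positive cone for bi-orderability}. The subgroup case is immediate: if $P$ is a bi-invariant positive cone on $G$ and $H \le G$, then $P \cap H$ inherits all three conditions verbatim. For the direct product $G_1 \times G_2$, I would use the lexicographic extension $P := (P_1 \times G_2) \cup (\{1\} \times P_2)$; closure under multiplication, trichotomy, and conjugation-invariance reduce to componentwise checks, and this cone visibly restricts to $P_i$ on each factor, as required by the ``moreover'' clause.

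For the quotient $G/N$ by a convex normal subgroup, I would define $Q := \{gN : g \in P,\ g \notin N\}$. The crux is well-definedness on cosets: if $g, g' \notin N$ and $g' = gn$ for some $n \in N$, then $g > 1$ must force $gn > 1$. Assuming the contrary, bi-invariance gives $n < g^{-1} < 1$; since $n, 1 \in N$, convexity forces $g^{-1} \in N$, contradicting $g \notin N$. The closure $Q \cdot Q \subset Q$ needs the additional check that a product of positive cosets never collapses into $N$, which follows by the same sandwiching argument applied to an intermediate element between $1$ and a hypothetical $gg' \in N$. Conjugation invariance on cosets is then automatic from bi-invariance of $P$ combined with normality of $N$, and trichotomy on $G/N$ transfers directly from trichotomy on $G$.

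The main obstacle is the free product case, which is Vinogradov's theorem \cite{Vin} and is substantially more delicate than the direct product: conjugation in $G_1 \ast G_2$ shuffles letters through reduced words of arbitrary syllable length, so there is no naive lexicographic positive cone. Rather than reproduce the original argument, I would invoke Vinogradov's construction, whose output is a bi-ordering on $G_1 \ast G_2$ restricting to the given orders on each $G_i$, thereby also supplying the ``moreover'' clause in this case. A modern alternative is to embed $G_1 \ast G_2$ into a Magnus-style group of formal power series, where the bi-ordering is read off from an ordering on the exponent monomials. Either way, once the free product step is granted, the proposition is established in all four cases.
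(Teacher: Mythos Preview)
The paper does not prove this proposition at all: it is stated as a standard background fact, with only the free-product case attributed to Vinogradov \cite{Vin}, and no argument is supplied for any of the four closure properties. Your proposal therefore cannot be compared line-by-line against a paper proof, because none exists.

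That said, your sketch is a correct and standard way to fill in what the paper omits. The subgroup and lexicographic direct-product arguments are routine; your convexity sandwich for the quotient case is exactly the right idea (both for well-definedness on cosets and for showing $Q\cdot Q\subset Q$); and deferring the free-product case to Vinogradov's construction, or to a Magnus-type embedding, is the appropriate move since that result genuinely requires a nontrivial construction. The only minor quibble is terminological: in the well-definedness step you invoke ``bi-invariance'' to get $n<g^{-1}$ from $gn<1$, but that step uses only left-invariance; bi-invariance is needed elsewhere (e.g.\ for conjugation-invariance of $Q$), so the overall argument is unaffected.
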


In particular, free groups are orderable as free products of copies of the (orderable) group $\Z$.


Let $X$ be any set, and $\textbf{P}(X)$ be its power set.

The spaces $LO(G)\subset \textbf{P}(G)$ ($O(G)\subset \textbf{P}(G)$) of all left-invariant (bi-invariant) positive cones in $G$ was defined in \cite{Sikora}. As there is a one-to-one correspondence between
left-orderings (bi-orderings) of $G$ and left-invariant (bi-invariant) positive cones in $G$, it is natural to describe $LO(G)$ ($O(G)$) as the space of all left-orderings of $G$ (the space of all bi-orderings of $G$).

We follow \cite{CR15} in our exposition below.

The power set may be identified with the set of all functions $X\to \{0,1\}$, via the characteristic function $\chi_A:X\to \{0,1\}$ associated to a subset $A\subset X$. Give $\{0,1\}$ the discrete topology, and then one may consider $\textbf{P}(X)$ as a product of copies of $\{0,1\}$ indexed by the set $X$. The product topology is defined as the smallest topology on the power set $\textbf{P}(X)$ such that for each $x\in X$ the sets $U_x=\{A\subset X\mid x\in A\}$ and $U_x^{c}=\{A\subset X\mid x\notin A\}$ are open. A basis for the product topology can be obtained by taking finite intersections of various $U_x$ and $U_x^{c}$.

It is then natural to ask:
\begin{question}\label{q1}
How does the topological space $LO(G)$ ($O(G)$) look like for a given group $G$?
\end{question}

The following theorem was proved in \cite{Sikora}:

\begin{theorem}\label{first}
Let G be a countable orderable group. Then the space
$LO(G)$ is a compact totally disconnected Hausdorff metric space. The space $O(G)$ is a closed subset of $LO(G)$.
\end{theorem}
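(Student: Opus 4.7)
My plan is to realize $LO(G)$ and $O(G)$ as closed subsets of the full power set $\mathbf{P}(G)$ equipped with the product topology, and then read off the desired topological properties from well known facts about products of two-point spaces.

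First I would establish the ambient properties. Since $G$ is countable, $\mathbf{P}(G)\cong\{0,1\}^{G}$ is a countable product of copies of the finite discrete space $\{0,1\}$. Hence it is compact (by Tychonoff, or more elementarily because $\{0,1\}^{\mathbb{N}}$ is homeomorphic to the Cantor set), Hausdorff, totally disconnected, and metrizable (e.g., by fixing an enumeration $g_1,g_2,\ldots$ of $G$ and setting $d(A,B)=2^{-n}$ where $n$ is the smallest index with $\chi_A(g_n)\neq\chi_B(g_n)$). All four properties are inherited by arbitrary closed subspaces, so it suffices to exhibit $LO(G)$ as closed in $\mathbf{P}(G)$ and $O(G)$ as closed in $LO(G)$.

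Next I would translate the defining conditions of a positive cone (Proposition~\ref{positive cone}) into subbasic closed conditions. For each pair $(g,h)\in G\times G$ the semigroup condition $P\cdot P\subset P$ contributes the requirement ``if $g\in P$ and $h\in P$, then $gh\in P$''; in terms of the subbasis this is the set $U_g^{c}\cup U_h^{c}\cup U_{gh}$, which is a finite union of clopen sets and hence clopen. For each $g\in G\setminus\{1\}$ the trichotomy condition is the requirement that exactly one of $g\in P$ and $g^{-1}\in P$ holds, which corresponds to the clopen set $(U_g\cap U_{g^{-1}}^{c})\cup(U_g^{c}\cap U_{g^{-1}})$; together with the condition $1\notin P$, which is simply $U_1^{c}$. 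Intersecting all of these (countably many, but the argument only needs that each is closed) gives a closed subset of $\mathbf{P}(G)$, which by Proposition~\ref{positive cone} is precisely $LO(G)$.

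For $O(G)$ I would add the bi-invariance condition from Proposition~\ref{positive cone for bi-orderability}: for every $g,h\in G$, if $h\in P$ then $ghg^{-1}\in P$. The corresponding set $U_h^{c}\cup U_{ghg^{-1}}$ is again clopen, and intersecting over all $(g,h)$ yields a closed set whose intersection with $LO(G)$ is $O(G)$. This shows $O(G)$ is closed in $LO(G)$, finishing the proof. There is no real obstacle here; the only point to be careful about is to write each axiom in the contrapositive form so that it becomes a union of subbasic clopen sets rather than a condition involving implications, which is purely bookkeeping.
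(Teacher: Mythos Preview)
The paper does not actually supply a proof of this theorem; it is quoted as a known result from \cite{Sikora} and used as background. Your argument is correct and is essentially the standard one: realize $\mathbf{P}(G)\cong\{0,1\}^G$, note that for countable $G$ this is a compact, Hausdorff, totally disconnected metric space, and then exhibit $LO(G)$ and $O(G)$ as intersections of subbasic clopen sets encoding the positive-cone axioms of Propositions~\ref{positive cone} and~\ref{positive cone for bi-orderability}. This matches Sikora's original approach, so there is nothing to contrast.
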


Let $<$ be a left-ordering of a group $G$, and let a finite chain of inequalities $g_0<g_1<\dots<g_n$ be given. Then the set of all left-orderings in which all these inequalities hold forms an open neighborhood of $<$ in $LO(G)$. The set of all such neighborhoods for all finite chains of inequalities is a local base for the topology of $LO(G)$ at the point $<$.

\begin{remark}
Instead of a chain of inequalities $g_0<g_1<\dots<g_n$ equivalently we may consider the sequence $x_1=g_0^{-1}g_1>1,\dots,x_n=g_{n-1}^{-1}g_n>1$, so $\{x_1,\dots,x_n\}\subset P_{<}$, for the positive cone $P_{<}$ associated with the order $<$.
\end{remark}

A left-ordering of $G$ is \textbf{isolated} in $LO(G)$ if it is the only left-ordering satisfying some finite chain of inequalities. Some groups $G$ have isolated points in $LO(G)$, while others do not.

Thus, by Theorem \ref{first} for a left-orderable (bi-orderable) countable group $G$, the space $LO(G)$ ($O(G)$) is homeomorphic to the Cantor set if and only if it has no isolated points.
We would, therefore, like to address the existence of isolated points in the space $LO(G)$ ($O(G)$) as the first step towards understanding
the structure of $LO(G)$ ($O(G)$).

It was established in \cite{Sikora} that a free abelian group $\Z^d$, $d\geq 2$ has no isolated orderings, and, therefore, the space $LO(\Z^d)=O(\Z^d)$ is isomorphic to the Cantor space.

The fundamental group of Klein bottle $K\cong \langle x,y\mid xyx^{-1}=y^{-1} \rangle$ has isolated orders \cite{CR15}. In fact, $K$ admits only finitely many (four) left-orders, all of them are isolated.

The Thompson's group $F$ has eight isolated bi-orders while $O(F)$ is uncountable \cite{navasrivas}.

Another important object  is the free group on two generators $F_2$. The following theorem was firstly proved in \cite{McCleary} 
\begin{theorem}\label{mainlo}
The space $LO(F_2)$ has no isolated points.
\end{theorem}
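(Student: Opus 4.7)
The plan is to encode the left-ordering dynamically and then exploit the freeness of $F_2$ to perturb the action. Given a left-ordering $<$ of $F_2$ with positive cone $P$ and a finite set $S=\{x_1,\dots,x_n\}\subset P$, I would aim to produce a distinct left-ordering $<'$ with $S\subset P_{<'}$. First, build the standard dynamical realization: a faithful action $\rho\colon F_2\to\mathrm{Homeo}_+(\R)$ together with a basepoint $p_0\in\R$ such that $g>1$ if and only if $\rho(g)(p_0)>p_0$ for every $g\in F_2$. Under this identification, finding $<'$ near $<$ amounts to producing an action $\rho'$ that keeps $S$ positive while flipping the sign of at least one other element.

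The perturbation I would use exploits the freeness of $F_2$. Write each $x_i$ as a reduced word in $\{a^{\pm 1},b^{\pm 1}\}$ and let $T_a\subset\R$ be the finite set of points to which $\rho(a^{\pm 1})$ is applied during the step-by-step evaluation of $\rho(x_i)(p_0)$, for $i=1,\dots,n$. Since $F_2=\langle a,b\rangle$ is free, any pair $(\alpha,\beta)\in\mathrm{Homeo}_+(\R)^2$ extends uniquely to an action. I would set $\rho'(b):=\rho(b)$ and choose $\rho'(a)\in\mathrm{Homeo}_+(\R)$ to agree with $\rho(a)$ on $T_a$ (a finite collection of order-preserving pointwise constraints), but otherwise arbitrary. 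Then for each $i$ the step-by-step computation of $\rho'(x_i)(p_0)$ visits the same intermediate points as $\rho(x_i)(p_0)$, so $\rho'(x_i)(p_0)=\rho(x_i)(p_0)>p_0$ and $x_i$ remains in the positive cone of the ordering $<'$ associated to $\rho'$ at $p_0$.

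It remains to choose $\rho'(a)$ off of $T_a$ so that $<'\neq <$. The finite set $T_a$ divides $\R$ into finitely many open intervals, on each of which $\rho'(a)$ may be any orientation-preserving homeomorphism matching the boundary values dictated by $T_a$. Since $\rho$ is faithful, the $\rho(a)$-orbit of $p_0$ is infinite and eventually escapes $T_a$, so there is some iterate $\rho(a)^k(p_0)$ lying in an interval where the perturbation of $\rho'(a)$ is unconstrained; by making this perturbation aggressive, some large iterate $\rho'(a)^N(p_0)$ can be pushed across $p_0$, flipping the sign of $a^N$ and producing $<'\neq <$.

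The main obstacle is this last step: verifying that the freedom left after fixing $\rho'(a)$ on $T_a$ is always sufficient to flip the sign of some element of $F_2$. The argument rests on the finiteness of $T_a$, the faithfulness of the dynamical realization, and the fact that iterating an orientation-preserving homeomorphism can carry an orbit arbitrarily far in either direction. Making this rigorous without ever disturbing the constraints imposed in the previous step requires a careful case analysis of how the orbit of $p_0$ under $\rho(a)$ interacts with $T_a$---in particular, when the orbit passes through $T_a$ repeatedly before settling into an interval on which the perturbation is effective.
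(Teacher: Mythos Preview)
Your overall strategy---dynamical realization followed by a perturbation that respects a finite set of pointwise constraints---is exactly the approach the paper sketches (following Navas). The difference is only cosmetic: the paper first passes to a dense supergroup so that the orbit of the basepoint is all of $\Q$, whereas you work directly with the $F_2$-orbit; both lead to the same ``perturb the generators off a finite set'' idea.

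The gap is in your proposed witness for $<'\neq <$. Your plan is to flip the sign of some $a^N$, but this cannot work in the most natural situation. Suppose for instance $a\in S$ (so that $p_0\in T_a$ and $\rho'(a)(p_0)=\rho(a)(p_0)>p_0$). Since $\rho'(a)$ is an orientation-preserving homeomorphism with $\rho'(a)(p_0)>p_0$, monotonicity forces $\rho'(a)^N(p_0)>p_0$ for every $N\ge 1$, regardless of how aggressively you perturb away from $T_a$. More generally, in \emph{any} left-ordering the positive cone is a subsemigroup, so once $a>1$ is forced you can never make $a^N<1$. The orbit of $p_0$ escaping $T_a$ does not help: the monotonicity constraint propagates through the unconstrained intervals. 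You therefore need a genuinely different candidate element whose sign can be flipped---typically something like a conjugate $w a w^{-1}$ or a commutator whose evaluation path lands in an interval where the perturbation has real freedom---and producing such an element requires more than the orbit-escape argument you outline.

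A secondary point you should address: after perturbation, the single-basepoint recipe $g>'1\iff \rho'(g)(p_0)>p_0$ need not give a \emph{total} order, since there is no guarantee the $\rho'$-stabilizer of $p_0$ is trivial. The usual fix is to use a dense sequence of basepoints and compare lexicographically; this does not disturb your constraint argument for the $x_i$ provided $p_0$ is the first basepoint, but it should be said.
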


Later, Theorem \ref{mainlo} has been proved in many different ways. We are mostly interested in the idea presented in \cite{Navas}. 
A slightly modified strategy of this proof (see \cite{CR15}) is as follows:

\begin{proof}[Sketch]
Let $<$ be a left order on a free group $F_2$. 
\begin{enumerate}[label=Step \arabic*.,ref=\arabic*]
    \item\label{step1}  Embed a free group $F_2$ into a countable dense left-ordered group $G$.
    \item\label{step2}  Construct an order-preserving bijection $t:G\to\Q$.
    \item\label{step3} $F_2$ acts on $G$ by left multiplication. Using $t$ transform it to the action on $\Q$. Namely, for $g\in F_2$ let $\rho(g)(t(h)) = t(gh)$ where $h$ runs over $G$ and so $t(h)$ runs over $\Q$. Extend $\rho(g)$ to an action $\R\to\R$.
    \item\label{step4}   Let $a$ and $b$ be the generators of a free group $F_2$. Then $\rho(a)$ and $\rho(b)$ generates its copy in the group $\h$ of orientation-preserving homeomorphisms of $\R$. Let $\alpha,\beta\in \h$ be ''small" perturbations of $\rho(a),\rho(b)$ respectively. Consider a subgroup $H=\langle \alpha,\beta \rangle$ of $\h$ with the induced left order.
    \item\label{step5}   It remains to check that for an appropriate choice of $\alpha,\beta$, $H$ is a free group, and the new left order $\prec$ on it is ''close" but different from the initial left order $<$ on $F_2$.
\end{enumerate}
\end{proof}

The critical part of the above proof is the construction in steps \ref{step1}-\ref{step3}. More generally, each countable left-ordered group order-preserving embeds into $\h$ in a similar way. This embedding is called the \emph{dynamical realization of a left-ordered group}.

In this paper, we study the space $O(F_2)$ of orderings of a free group $F_2$ on two generators.

The main result of this paper is the following theorem:

\begin{theorem}\label{main}
The space $O(F_2)$ of orderings of a free group on two generators has no isolated points.
\end{theorem}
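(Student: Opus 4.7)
The plan is to adapt the five-step strategy sketched above for $LO(F_2)$ to the bi-ordered setting. Fix a bi-order $<$ on $F_2$ with positive cone $P$ and a finite set $S=\{x_1,\ldots,x_n\}\subset P$; the goal is to produce a bi-order $<'\neq <$ whose positive cone still contains $S$. Steps 1--3 can be imported with essentially no change, since any countable bi-ordered group is in particular a countable left-ordered group: embed $(F_2,<)$ into a countable bi-ordered group $(G,<_G)$ in which $<_G$ is dense, for instance by forming a free product with $\Q$ and extending the order via Proposition \ref{family of left-orderable groups is closed under}; fix an order-preserving bijection $t:G\to\Q$; and build the dynamical realization $\rho:G\to Homeo_+(\R)$ by $\rho(g)(t(h))=t(gh)$, extended by continuity.

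The real difficulty lies in Step 4, where the bi-invariant case genuinely diverges from the left-invariant one. For a left-order, any sufficiently small perturbation of $(\rho(a),\rho(b))$ inside $Homeo_+(\R)$ suffices; but bi-invariance of the inherited order on $\langle\rho(a),\rho(b)\rangle$ reflects a rigid compatibility between $\rho$ and conjugation that is destroyed by generic perturbations. To engineer admissible perturbations I would work locally: choose $\alpha,\beta$ equal to $\rho(a),\rho(b)$ on a neighborhood of the finitely many points of $\Q$ used to verify $S\subset P_{<'}$, and allow them to differ only on a region $K\subset\R$ disjoint from this neighborhood and from every $\rho(F_2)$-translate of it which could affect the signs of the $x_i$. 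On $K$ one must modify the dynamics in a way that respects the conjugation symmetry responsible for bi-invariance, which in effect restricts perturbations to a smaller ambient group than $Homeo_+(\R)$, for example to the centralizer of a prescribed family of translations coming from $t$, or to an ordered divisible abelian extension of $G$ in which bi-invariance is automatic.

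Step 5 then amounts to three checks: (i) $\langle\alpha,\beta\rangle$ is free of rank two, via a ping-pong argument on the regions where $\alpha,\beta$ differ from $\rho(a),\rho(b)$; (ii) the induced bi-order on $F_2$ agrees with $<$ on every $x_i$, which is direct from the locality of the perturbation; and (iii) the induced bi-order actually differs from $<$, which can be arranged by designing $\alpha,\beta$ to reverse the sign of a specific deep commutator $[u,v]$ whose $t$-image lies in $K$. I expect the principal obstacle to be reconciling (i) with bi-invariance---that is, showing that the subspace of perturbations which simultaneously preserve $S$, preserve bi-invariance, and generate a free group of rank two is nonempty and still rich enough to realize the desired sign flip in (iii). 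Overcoming this may well require replacing the ambient $Homeo_+(\R)$ of the classical dynamical realization by a bi-ordered ambient group tailored to $F_2$, such as one built from the Magnus expansion or from a lexicographic extension along the lower central filtration of $F_2$.
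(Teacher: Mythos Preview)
Your proposal is not a proof but a research plan, and it misidentifies where the work lies. The claim that Steps 1--3 ``can be imported with essentially no change'' is the central error. The standard dynamical realization $\rho(g)(t(h))=t(gh)$ does embed $(F_2,<)$ in $Homeo_+(\R)$, but---as the paper explicitly notes---this embedding is rigid with respect to bi-orderability: nearby pairs $(\alpha,\beta)$ need not generate a bi-ordered subgroup, and there is no visible mechanism to certify that they do. The paper's main technical contribution is a \emph{different} dynamical realization (Theorem~\ref{n3}) built on a Cantor set scaffold: one first embeds $F_2$ into a \emph{strongly dense} group $G$ (density with respect to $\ll$, not $<$; this needs the wreath-product construction of Lemma~\ref{strong}, not a free product with $\Q$), then matches the $\sim$-classes of $G$ bijectively with the complementary intervals of a Cantor set, and defines $f_g$ interval-by-interval via an auxiliary ordered group $S$. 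This structure is precisely what makes the later perturbation checkable.

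Your Steps 4--5 correctly flag the real difficulty but do not address it. The suggestions---restricting to a centralizer of translations, or passing to a divisible abelian extension, or flipping a ``deep commutator''---are speculative and do not match what actually works. The paper's perturbation modifies $f_a,f_b$ only on $[0,q]\cap\C$ and on the small intervals below $(p,q)$, replacing the interval maps by fresh free generators $\gamma_{a,I},\gamma_{b,I}$ in an enlarged free product $S'=S\ast F_\infty\ast F_\infty^{(a)}\ast F_\infty^{(b)}$. The heart of the argument is Lemma~\ref{waa}, an inductive combinatorial analysis showing that for every nontrivial word there is a \emph{highest} interval on which $f_g$ acts nontrivially; this simultaneously yields freeness and bi-orderability of $\langle f_a,f_b\rangle$ via the partial order on $\h$. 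The sign flip is then produced not by a commutator trick but either by reversing the order on a nontrivial convex normal subgroup, or by choosing $f_a(q_k)$ versus $f_b(q_k)$ to reverse the sign of a specific element $z_k^a(z_k^b)^{-1}$. None of this is visible from the classical realization you propose to reuse.
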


\begin{remark}
Although in this paper, we prove Theorem \ref{main} only for $O(F_2)$, all arguments can be appropriately adapted for all $O(F_n)$, $n>2$.
\end{remark}

We will follow the strategy from the above proof of Theorem \ref{mainlo} in our proof.

Similarly to step \ref{step1} from the above proof, we need to embed $F_2$ into a group with some density property. Our construction requires a stronger condition than a simple density. We will call this \emph{strong density}. It is discussed in Section \ref{sec:3}. Also, we prove that every countable bi-ordered group embeds into a countable strongly dense group.

Following steps \ref{step2}-\ref{step3}, in Section \ref{sec:4}, we construct two dynamical realizations of bi-ordered groups, one with action on $\qq$ equipped with lexicographic order, and another with action on $\R$. Equivalently, we construct embeddings of a countable ordered group $G$ into the group $\mathrm{Homeo}_{+}(\qq)$ of the order-preserving homeomorphisms of $\qq$ and $\h$. An important difference between bi-ordered and left-ordered cases is that $\mathrm{Homeo}_{+}(\qq)$ and $\h$ are left-ordered but not bi-ordered groups. Therefore not every faithful action on $\qq$ or $\R$ generates an order.

Finally, in Section \ref{sec:5} we show how to perturb a given order on $F_2$. We begin with a dynamical realization of $F_2$ (as an action on $\qq$). Then, we define a family of admissible changes of this action. Every member of this family will generate an order on $F_2$. To finish the proof of Theorem \ref{main}, we will choose a new order that sufficiently approximate the original order on $F_2$.

\textbf{Acknowledgements:}

Authors are grateful to Dr. Adam Clay for telling them about this conjecture, some known previously made attempts to solve it, reading drafts of this paper, and pointing out that the construction in Section \ref{sec:3} is an unrestricted wreath product.

\section{Further notation}

\label{sec:2}

Let $(G,<)$ be an ordered group, and $g,h\in G.$ We will use the following notations:
\begin{enumerate}
\item We denote the conjugation by $g^h\coloneqq hgh^{-1}$.

\item A subset $A\subset G$ is said to be convex if for any $f,h\in X$, $f<h$, every element $g\in G$ satisfying $f<g<h$ belong to $A$.

\item We denote by $\Gamma_g=\Gamma_g(G,<)$ the largest convex subgroup of $G$ that doesn't contain $g\in G\setminus\{1\}$. We denote the set of all 
such subgroups by $ \Ga=\Ga(G,<)$.


Similarly, we denote by $\bar{\Gamma}_g$ the smallest convex subgroup of $G$ containing $g\in G\setminus\{1\}$, and by $\bar{\Ga}$ the set of all 
such subgroups.

The group $G$ acts on $\Ga$ by conjugation. This action satisfies $\left(\Gamma_g\right)^h=\Gamma_{g^h}$.

The set $\Ga$ is naturally ordered by inclusion. The action of $G$ by conjugation preserves this order.

\item We will write $g\ll h$ when $\Gamma_g\subsetneq \Gamma_h$, or, equivalently, $g\in\Gamma_h$.

\item We say the elements of $G$ are equivalent if they define the same convex subgroup. 
Namely, $g\sim h$ if $\Gamma_g=\Gamma_h.$

\end{enumerate}

\section{Strongly dense groups}

\label{sec:3}

\begin{definition} 
\label{definition of strongly dense}
An ordered group $(G,<)$ is called \textbf{strongly dense} (with respect to the order $<$) if the following conditions are satisfied:
\begin{enumerate}
\item $\forall g_1,g_2\in G,g_1\ll g_2,\exists g_3\in G \hbox{ such that } g_1\ll g_3\ll g_2;$ 
\item $\forall g_1\in G\,\exists g_2,g_3\in G \hbox{ such that } g_2\ll g_1\ll g_3.$
\end{enumerate}
\end{definition}
In other words, the group $G$ is strongly dense if the correspondent set of convex subgroups $\Ga$, ordered by inclusion, is dense and doesn't contain the smallest and largest elements.

Replacement of the relation $\ll$ with the relation $<$ in Definition \ref{definition of strongly dense} above leads to the definition of a dense group. In the definition of a dense group, the second condition is omitted because the analogous condition is automatically satisfied.

Another approach to defining strongly dense groups is given below.

\begin{definition} Let $(H,<)$ be an ordered group.
\begin{enumerate}
\item A pair $(h^{\prime},h^{\prime\prime}),h^{\prime},h^{\prime\prime}\in H,h^{\prime}\ll h^{\prime\prime},$ is called a \textbf{gap} if there is no $h\in H$ such that $h^{\prime}\ll h\ll h^{\prime\prime}$.

Equivalently, $(h^{\prime},h^{\prime\prime})$ is a gap if $\Gamma_{h^{\prime\prime}}=\bar{\Gamma}_{h^{\prime}}$.

\item An element $h\in H$ is called a \textbf{peak} if there is no $h_1\in H$ such that $h\ll h_1$.

Equivalently, $h$ is a peak if $\bar{\Gamma}_h=H$.

\item An element $h\in H$ is called a \textbf{bottom} if there is no $h_1\in H\setminus\{1\}$ such that $h\gg h_1$.

Equivalently, $h$ is a bottom if $\Gamma_h=\{1\}$.

\end{enumerate}
\end{definition}

It is easy to see that an ordered group is strongly dense if and only if it contains no gaps, peaks, and bottoms.

The following theorem is the key result of this section.
\begin{theorem}\label{embeds strong} Any countable ordered group $F$ embeds in some countable strongly dense ordered group $G$.
\label{strong}
\end{theorem}

Our plan for proving this theorem is to eliminate peaks, gaps, and bottoms consecutively. By eliminating a peak, gap, or bottom of a countable group $H$, we understand the embedding $H$ in a countable ordered group $H_1$ without this peak, gap, or bottom. For example, if we want to eliminate a gap $(h^{\prime},h^{\prime\prime})$ in $H$, then we construct $H_1$ so there is $h_1\in H_1$ such that $h^{\prime}\ll h_1\ll h^{\prime\prime}$.

The next lemma states that we can always eliminate gaps, peaks, and bottoms.
\begin{lemma} 
For any countable ordered group $H$ and any peak, gap, or bottom in $H$, there is its countable extension $H_1$ without this peak, gap, or bottom.
\label{eliminate}
\end{lemma}

Firstly we prove that Lemma \ref{eliminate} implies Theorem \ref{strong}.
\begin{proof}[Proof of Theorem \ref{strong}]
Assume that we can eliminate gaps, peaks, and bottoms in any countable ordered group.

Since $F$ is a countable group, it contains at most countably many gaps, peaks, and bottoms. So we can enumerate all of them. Consider the chain $F=G_0=G_0^{(0)}<G_0^{(1)}<G_0^{(2)}<\dots$ of groups, constructed in the following way:
if the group $G_0^{(i)}$ contains the gap, peak, or bottom with number $i$ then we eliminate it, otherwise we set $G_0^{(i+1)}=G_0^{(i)}$.
The group $G_0^{(i)}$ is countable ordered and does not contain the $i^{\text{th}}$ gap, peak, or bottom of the group $F=G_0$.

Let $G_1=\bigcup\limits_{i}G_0^{(i)}$. The group $G_1$ is a countable ordered group without any gap, peak, and bottom of $F=G_0$, but possibly with new gaps, peaks, and bottoms.

Similarly, construct a new chain $G_1=G_1^{(0)}<G_1^{(1)}<\dots$. Get a countable ordered $G_2=\bigcup\limits_{i}G_1^{(i)}$ without any gaps, peaks, and bottoms of $G_1$. Then construct the chain $F=G_0<G_1<\dots$ of countable ordered groups, where $G_{i+1}$ does not contain any gap, peak, and bottom of $G_i$. Let $G=\bigcup\limits_{i}G_i$. Then $G$ is a countable ordered group.

Assume that $G$ is not strongly dense. Then it contains some gap, peak, or bottom. Let it be the gap $(g_1,g_2)$, where $g_1\in G_i$, $g_2\in G_j$. Then $(g_1,g_2)$ is a gap in $G_{\max\{i,j\}}$, so it has been eliminated during construction of $G_{\max\{i,j\}+1}$. This means $G>G_{\max\{i,j\}+1}$ does not contain the gap $(g_1,g_2)$. Contradiction. Similarly, the cases where $G$ contains a peak or a bottom are also impossible.

So $G$ is a countable strongly dense group.
\end{proof}

It remains to prove Lemma \ref{eliminate}.
\begin{proof}[Proof of Lemma \ref{eliminate}]

\textbf{How to eliminate a peak $h\in H$?}

The group $\mathbb{Z}\times H$, where $\mathbb{Z}=\langle z\rangle$ is an infinite cyclic group ordered lexicographically, does not have the peak $h\in H$, since $z\gg h,\forall h\in H$.

\textbf{How to eliminate a bottom $h\in H$?}

The group $H\times \mathbb{Z}$ ordered lexicographically does not have the bottom $h\in H$, since $z\ll h,\forall h\in H\setminus\{1\}$.

\textbf{How to eliminate a gap $(h^{\prime},h^{\prime\prime})$?}

To remove the gap $(h^{\prime},h^{\prime\prime})$, we want to construct a new ordered group $H_1>H$, with order $<$ on $H_1$ as an extension of the order $<$ on $H$, and there is an element $z\in H$ such that $h^{\prime}\ll z\ll h^{\prime\prime}$ in $H_1$. We will search for $H_1$ as a restricted wreath product $\mathbb{Z}\mathrm{ wr}_{\Omega}\,H$, where $\mathbb{Z}=\langle z\rangle$ is the infinite cyclic group. In fact, we add a new generator $z$ to $H$ and put it between $h^{\prime}$ and $h^{\prime\prime}$ to remove the gap.

\begin{remark}
The orderability of a restricted wreath product has been proved in \cite{neumann}. However, the order used in \cite{neumann} does not eliminate gaps. 
\end{remark}

We think of $H_1$ as a free product $H\ast \Z$ quotient by some relations.

When may an element $h\in H$ commute with $z$?

Elements $z$ and $h$ commute if and only if $z^h=z$.  Since the group $H_1$ supposed to be ordered, $h^{\prime}\ll z\ll h^{\prime\prime}$ implies $(h^{\prime})^h\ll z^h=z\ll (h^{\prime\prime})^h$. This is possible only if conjugation by $h$ preserves the classes $C_{h^{\prime}}$ and $C_{h^{\prime\prime}}$.

Let $M$ be a set of all those $h$, i.e.,
$M\coloneqq \{h\in H \mid (h^{\prime})^h\sim h^{\prime}\}$. Note that $M$ is a subgroup of $G$.

Conjugation by any $h\in H$ preserves the order $<$ and the class $C_{h^{\prime}}$. Since $(h^{\prime},h^{\prime\prime})$ is a gap, $C_{h^{\prime\prime}}$ is the smallest class larger then $C_{h^{\prime}}$.  Therefore, conjugation by $h$ also preserves the class $C_{h^{\prime\prime}}$.

This gives $M=\{h\in H\mid (h^{\prime\prime})^h\sim h^{\prime\prime}\}$.

Each element $h\in H\ast Z$ could be written in the form $h=h_0 \left(z^{\epsilon_1}\right)^{h_1} \dots \left(z^{\epsilon_n}\right)^{h_n}$, where $n\geq0$, $h_0,h_1,\dots,h_n\in H$,  $\epsilon_i\in\{\pm 1\}$, $i=1,\dots, n$.

\begin{remark}
Now we are ready to define $H_1$ as a restricted wreath product.
Let $\Omega$ be the set of left cosets of $M$ in $H$. 

The action of $H$ on $\Omega$ is multiplication on the left. Then $H_1\coloneqq \mathbb{Z}\mathrm{ wr}_{\Omega}\,H$.
\end{remark}

Recall that by definition $H_1=K\rtimes H$, where $K\coloneqq \bigoplus\limits_{w\in\Omega}\mathbb{Z_{\omega}}$ is the direct sum of copies of $\mathbb{Z_{\omega}}\coloneqq \mathbb{Z}$ indexed by the set $\Omega$.

Note that each $\omega\in\Omega$ has a form $\omega=h M$ for some $h\in H$.
We will use notation $\left(z^k\right)^{hM}$, $k\in\mathbb{Z}$, for elements of $\mathbb{Z}_{\omega}$.

Then $z^{h_1M}$ and $z^{h_2M}$ commute for all $h_1,h_2\in H$. In addition, $z^M$ and $m$ commute for all $m\in M$.

Taking into account the above notation, we can rewrite $H_1$ as 
\begin{equation*}
H_1=\{h_0(z^{k_1})^{h_1M}\dots(z^{k_n})^{h_n M}\mid n\geq0, h_0,\dots,h_n\in H,k_i\in\mathbb{Z}, i=1,\dots, n\}.
\end{equation*}

The multiplication $\bullet$ in $H_1$ comes from it being a restricted wreath product:
\begin{gather*}
\left(h_0^{(1)}(z^{k_1^{(1)}})^{h_1^{(1)}M}\dots(z^{k_{n_1}^{(1)}})^{h_{n_1}^{(1)}M}\right)\bullet \left(h_0^{(2)}(z^{k_1^{(2)}})^{h_1^{(2)}M}\dots(z^{k_{n_2}^{(2)}})^{h_{n_2}^{(2)}M}\right)\\
=h_0^{(1)}h_0^{(2)}(z^{k_1^{(1)}})^{h_0^{(2)}h_1^{(1)}M}
\dots(z^{k_{n_1}^{(1)}})^{h_0^{(2)}h_{n_1}^{(1)}M}(z^{k_1^{(2)}})^{h_1^{(2)}M}\dots(z^{k_{n_2}^{(2)}})^{h_{n_2}^{(2)}M}.
\end{gather*}

Then the inverse is defined as follows:
\begin{equation*}
(h_0(z^{k_1})^{h_1M}\dots(z^{k_{n}})^{h_{n}M})^{-1}=h_0^{-1}(z^{-k_1})^{h_0^{-1}h_1M}\dots(z^{-k_{n}})^{h_0^{-1}h_{n}M}.
\end{equation*}

Let us define the order on $H_1$.

Firstly, we extend the relation $\ll$ from $H$ to $H_1$ by the following rules:\\
1) $z^{h_1M}\ll z^{h_2M}$ if $(h^{\prime})^{h_1}\ll (h^{\prime})^{h_2}$ (or equivalently $(h^{\prime\prime})^{h_1}\ll (h^{\prime\prime})^{h_2}$) and $h_2^{-1}h_1\not\in M$; 

\noindent 2) $h\ll z^{h_1M}$ if  $h\ll (h^{\prime\prime})^{h_1}$, and $h\gg z^{h_1M}$ if $h\gg (h^{\prime})^{h_1}$.

\begin{remark}
\label{inmind}
From the definition of $M$ one can see that for every $h_1,h_2\in H$ exactly one of $z^{h_1M}\ll z^{h_2M}$, $z^{h_2M}\ll z^{h_1M}$ or $h_1M=h_2M$ holds. Therefore, all $z^{h_1 M}$, $h_1\in H$, are comparable with each other with respect to the relation $\ll$. Also every $z^{h_1 M}$, $h_1\in H$, is comparable with every $h\in H$.
\end{remark}

Keeping Remark \ref{inmind} in mind, we can define the positive cone $P_1$ of $H_1$ now as follows:

For $h=h_0(z^{k_1})^{h_1 M}\dots(z^{k_n})^{h_n M}\in H$ let $z^{h_i M}$ be the largest (with respect to the relation $\ll$) of $z^{h_j M}$, $j=1,\dots,n$. Now we say that $h\in P_1$ if either $h_0\gg z^{h_i M}$ and $h_0\in P$ (where $P$ is the positive cone of $H$), or $h_0\ll z^{h_i M}$ and $k_i>0$.

Checking the properties of Proposition \ref{positive cone for bi-orderability} for $P_1$ is straightforward.

Finally, note that $h^{\prime}\ll z^M\ll h^{\prime\prime}$, so the group $H_1$ does not contain the gap  $(h^{\prime},h^{\prime\prime})$.

\end{proof}

Let $F$ be an ordered group and $G$ be its ordered extension. We say a positive element $g\in G$ is small with respect to $F$ if $g<f$ for any positive $f\in F$.

\begin{lemma}\label{nosmall}
Let $F$ be an ordered group with no bottoms, and let $G>F$ be its strongly dense extension constructed as in Theorem \ref{embeds strong}. Then the group $G$ does not contain small with respect to $F$ elements.
\end{lemma}
\begin{proof}
Assume there is a small with respect to $F$ element in $G$.
Recall that $G=\bigcup\limits_{i}G_i$, with $F=G_0<G_1<G_2<\dots$. Let $G_i$ be the first group in the chain that contains a small with respect to $F$ element. Let $G_i=\bigcup\limits_{j}G_{i-1}^{(j)}$ with $G_{i-1}=G_{i-1}^{(0)}<G_{i-1}^{(1)}<G_{i-1}^{(2)}<\dots$, and let $G_{i-1}^{(j)}$ be the first group in the chain that contains a small with respect to $F$ element $g$. The group $G_{i-1}^{(j)}$ is constructed from the group $G_{i-1}^{(j-1)}$ by eliminating one of its gaps, peaks or bottoms as in Lemma \ref{eliminate}. 

Since every bottom of $G_{i-1}^{(j-1)}$ is small with respect to $F$, the group  $G_{i-1}^{(j-1)}$ has no bottoms.

Peak elimination clearly does not add small elements.

Let $G_{i-1}^{(j)}$ eliminated the gap $(g_1,g_2)$ of $G_{i-1}^{(j-1)}$.
Then, as was shown in Lemma \ref{eliminate}, $g\in G_{i-1}^{(j)}$ can be written as 
\begin{equation*}
    g=h_0 \left(z^{k_1}\right)^{h_1M}\dots \left(z^{k_n}\right)^{h_nM},
\end{equation*}
where $h_0,h_1,\dots, h_n\in  G_{i-1}^{(j-1)}$.

Let $h_0$ be the largest in $h_0, \left(z^{k_1}\right)^{h_1M},\dots, \left(z^{k_n}\right)^{h_nM}$. This means $h_0\gg (h_2)^{h_i}$, $i=1,\dots,n$. And $h_0>1$ as $g>1$.
Then
\begin{align*}
    g^2&=\left(h_0\left(z^{k_1}\right)^{h_1M}\dots \left(z^{k_n}\right)^{h_nM}\right)^2\\
    &=h_0^2 \left(\left(z^{k_1}\right)^{h_0h_1M}\dots \left(z^{k_n}\right)^{h_0h_nM}\right)\left(\left(z^{k_1}\right)^{h_1M}\dots \left(z^{k_n}\right)^{h_nM}\right)\\
    &=h_0 \left(h_0\left(z^{k_1}\right)^{h_0h_1M}\dots \left(z^{k_n}\right)^{h_0h_nM}\left(z^{k_1}\right)^{h_1M}\dots \left(z^{k_n}\right)^{h_nM}\right).
\end{align*}

Note that $h_0\gg (h_2)^{h_0h_i}$ since $h_0\gg (h_2)^{h_i}$, so 
\begin{equation*}
    h_0\left(z^{k_1}\right)^{h_0h_1M}\dots \left(z^{k_n}\right)^{h_0h_nM}\left(z^{k_1}\right)^{h_1M}\dots \left(z^{k_n}\right)^{h_nM}>0
\end{equation*}
and $g^2>h_0$.

Similarly, if $\left(z^{k_i}\right)^{h_iM}$ is the largest in $h_0, \left(z^{k_1}\right)^{h_1M},\dots, \left(z^{k_n}\right)^{h_nM}$ then $g^2>(z)^{h_iM}>g_1^{h_i}$.

In both cases $g$ is grater than some positive element $g^{\prime}\in G_{i-1}^{(j-1)}$. Since $G_{i-1}^{(j-1)}$ has no small with respect to $F$ elements, $g^{\prime}>f^{\prime}>1$ for some $f^{\prime}\in F$. Since $F$ has no bottoms, $f^{\prime}\gg f>1$ for some $f\in F$. So 
\begin{equation*}
    g^2>g^{\prime}>f^{\prime}>f^2
\end{equation*}
and $g>f$. Thus $g$ is not small with respect to $F$.

\end{proof}

\begin{corollary}\label{nosmallfree}
Let $F_2$ be an ordered free group, and let $G>F_2$ be its strongly dense extension as in Theorem \ref{embeds strong}. Then $G$ has no small with respect to $F_2$ elements.
\end{corollary}
\begin{proof}
This is true since $F_2$ with any order has no bottoms. Indeed,
let $f\in F_2$ be a bottom. Let $g\in F_2$ be any element that does not commute with $f$. We may assume $f>f^g>1$. Otherwise, we replace $g$ to $g^{-1}$. Then $f_g\in \bar{\Gamma}_f$. It follows from \cite[Theorem 2.3.1]{kopytovmedvedev} that the group $\Gamma_f$ is normal in $\bar{\Gamma}_f$ and the quotient $^{\bar{\Gamma}_f}\!/\!_{\Gamma_f}$ is abelian.
Since $f$ is a bottom, $\Gamma_f=\{1\}$ and $^{\bar{\Gamma}_f}\!/\!_{\Gamma_f}\cong\bar{\Gamma}_f$. But $\bar{\Gamma}_f$ is not abelian since it contains non-commutative elements $f$ and $f^g$. So $F_2$ has no bottoms.
\end{proof}

\section{Dynamical realization of bi-ordered groups}

\label{sec:4}

The dynamical realization of a left-ordered group $G$ rises from the action of $G$ on itself by left multiplication. If $G$ is a bi-ordered group, then it acts order-preserving on $\Ga$. This action corresponds to the action on $\Q$ as in the standard dynamical realization construction for left-ordered groups. However, this action is insufficient to construct a sort of dynamical realization. For instance, if $G$ is abelian, then the conjugation action always is trivial, so it provides no information about the order of $G$. Therefore we need a more complicated action. In this section, we will construct an action of $G$ on $\qq$ to prove the following theorem.

\begin{theorem}\label{dynre}
A countable group $G$ is bi-ordered if and only if it acts on $\Q\times \Q$ in the following way:
\begin{enumerate}[label=\Roman*]
    \item \label{tc1} the action order-preserving permutes layers $q\times \Q$, $q\in\Q$;
    \item \label{tc2} for any layer $q\times\Q$ the action by any element $g\in G$ on the second component of $q\times\Q$ is either trivial, increasing, or decreasing;
    \item \label{tc3} for each $g\in G\setminus\{1\}$ there is a layer $q_g\times \Q$ such that the fixed points under the action by $g$ are exactly the pairs $(q,p)\in\qq$ with $q>q_g$;
\end{enumerate}
\end{theorem}

\begin{proof}
We prove the ''if" part by showing that any group of all such actions on $\Q\times \Q$ is ordered. 

Let $F$ be the set of all such actions on $\qq$, and let $H\subset F$ be a group.

We say $g\in H$ is positive (negative) if it increases (decreases) the second component in the layer $q_g\times \Q$. Clearly, every nontrivial action is either positive or negative.

Consider two positive elements $g,h\in H$. Then $q_{gh}=\max\{q_g,q_h\}$ and the action of $gh$ increases the second component in the layer $q_{gh}\times\Q$. So, $gh$ is a positive element.

Consider a positive $g\in H$ and any $h\in H$. Then $q_{g^h}\times \Q=h(q_g\times\Q)$, and the action of $g^h$ on $q_{g^h}\times \Q$ is the conjugated action of $g$ on $q_g\times\Q$. So, it increases the second component. Then, $g^h$ is positive. 

Thus, $H$ is an ordered group.

It remains to prove the ''only if" part of the theorem.

Using Theorem \ref{embeds strong}, we may assume that the group $G$ is strongly dense. Then, by Cantor's back and forth argument \cite{cantorbf}, the set of convex subgroups $\Ga=\Ga(G)$ is order-preserving isomorphic $\Q$. We associate $\Ga$ with the first components in $\qq$. We plan to construct an ordered dense group $S$ and associate it with the second components in $\qq$. Thus, constructing the action on $\qq$ is equivalent to constructing the action on $\Ga\times S$.

We are looking for the action $\alpha$ of $G$ on $\Ga\times S$ of the following form. Let $\alpha_0$ be an order-preserving action of $G$ on $\Ga$, and let $\left\{\alpha_{\Gamma}\right\}_{\Gamma\in\Ga}$ be a collection of order-preserving actions of $G$ on $S$. Then the action $\alpha$ is given by
\begin{equation}\label{actionalpha}
    \alpha(g,(\Gamma,s))=(\alpha_0(g,\Gamma),\alpha_{\Gamma}(g,s)), \quad g\in G, \Gamma\in\Ga, s\in S.
\end{equation}

We require the actions $\alpha_0$, $\alpha_{\Gamma}$ to satisfy the following conditions
\begin{enumerate}
    \item\label{c1} for every $g\in G\setminus\{1\}$ there exists a convex subgroup $\Gamma(g)\in\Ga$ such that $\alpha_0(g,\Gamma)=\Gamma$ for all $\Gamma\geq\Gamma(g)$;
    
    \item \label{c2} each action $\alpha_{\Gamma}(g,\cdot)$ is either trivial, increasing, or decreasing;

    \item \label{c3} the action $\alpha_{\Gamma}(g,\cdot)$ is trivial if and only if $\Gamma>\Gamma(g)$ or $g=1$;
    
    \item\label{c4} the action $\alpha_{\Gamma(g)}(g,\cdot)$ is increasing for $g>1$ and decreasing for $g<1$;
    
    \item\label{c5} $\alpha_{h\cdot\Gamma}\bigl(g,\alpha_{\Gamma}(h,s)\bigr)=\alpha_{\Gamma}(gh,s)$, for all $g,h\in G$, $s\in S$, $\Gamma\in\Ga$.
\end{enumerate}

Condition \ref{c5} implies that $\alpha$ defined by \eqref{actionalpha} is a group action. Indeed,
\begin{align*}
    g\cdot(h\cdot(\Gamma,s)) &=g\cdot(h\cdot\Gamma,\alpha_{\Gamma}(h,s))=\biggl(g\cdot(h\cdot\Gamma),\alpha_{h\cdot\Gamma}\bigl(g,\alpha_{\Gamma}(h,s)\bigr)\biggr)\\
    &=\bigl((gh)\cdot\Gamma,\alpha_{\Gamma}(gh,s)\bigr)=(gh)\cdot(\Gamma,s).
\end{align*}

Conditions \ref{c1}-\ref{c3} mean that the action $\alpha$ satisfy the conditions \ref{tc1}-\ref{tc3} of the theorem. We have 
\begin{enumerate}[label=\Roman*]
    \item the action $\alpha$ order-preservingly permutes layers $\Gamma\times S$ according to the action $\alpha_0$;
    \item for a fixed layer $\Gamma\times S$ the action $\alpha_{\Gamma}(g,\cdot)$ is either trivial, increasing, or decreasing by condition \ref{c2};
    \item for each $g\in G\setminus\{1\}$ there is a layer $\Gamma(g)\times S$ such that the fixed points under the action $\alpha(g,\cdot)$ are exactly the pairs $(\Gamma,s)\in\Ga\times S$ with $\Gamma>\Gamma(g)$; 
\end{enumerate}

Condition \ref{c4} gives a characterization of the order

\begin{enumerate}[label=\Roman*,start=4]
    \item $g>1$ (respectfully, $g<1$) if the action $\alpha_{\Gamma(g)}(g,\cdot)$ is increasing (respectfully, decreasing).
\end{enumerate}

Next, we are going to build the group $S$ and such actions. 

For $\alpha_0$ we take the conjugation action $\alpha_0(g,\Gamma)=(\Gamma)^g$. We will take each action $\alpha_{\Gamma}(g,\cdot)$ to be a left multiplication by some $s_{g,\Gamma}\in S$. Then, with $\Gamma(g)=\Gamma_g$, the conditions \ref{c1} and \ref{c2} are automatically satisfied. The conditions \ref{c3}-\ref{c5} are transformed into
\begin{enumerate}[label=\arabic*\textprime.,ref=\arabic*\textprime,start=3]
    \item \label{c3prime} $s_{g,\Gamma}=1$ if and only if $g\in \Gamma$;
    \item \label{c4prime} $s_{g,\Gamma_g}>1$ for all $g>1$ and $s_{g,\Gamma_g}<1$ for all $g<1$;
    \item \label{c5prime}  $s_{g,\left(\Gamma\right)^h}\cdot s_{h,\Gamma}=s_{gh,\Gamma}$, for all $g,h\in G$, $\Gamma\in\Ga$.
\end{enumerate}

The action $\alpha_0$ splits $\Ga$ into orbits. Orbits are equivalence classes and they partition $\Ga=\0_1\cup \0_2\cup\dots$. For each orbit $\0_i$ we choose a representative $\Gamma_i\in\0_i$.
Let also $G_i=N_G(\Gamma_i)$ be the normalizer of $\Gamma_i$ in $G$. Clearly, $G_i<G$, and the order or $G$ induces the order on $G_i$. Then $\Gamma_i$ is a normal convex subgroup of $G_i$. Therefore the quotient group $H_i= ^{G_i}\!/\!_{\Gamma_i}$ is ordered with the order given by $h \Gamma_i\in H_i$ is positive if $h$ is positive in $G_i$ and $h\notin \Gamma_i$.  

For each $\Gamma\in\0_i$ we choose  $h_{\Gamma}\in G$ such that $\left(\Gamma\right)^{h_{\Gamma}}=\Gamma_i$. For a pair $(g,\Gamma)\in G\times \0_i$ consider the element $h_{g,\Gamma}=h_{(\Gamma)^{g}}gh_{\Gamma}^{-1}$. We have $\left(\Gamma_i\right)^{h_{\Gamma}^{-1}}=\Gamma$, $\left(\Gamma_i\right)^{gh_{\Gamma}^{-1}}=\left(\Gamma\right)^g$, and $\left(\Gamma_i\right)^{h_{g,\Gamma}}=\left(\left(\Gamma\right)^g\right)^{h_{(\Gamma)^{g}}}=\Gamma_i$. So $h_{g,\Gamma}\in G_i$.

Let $S=\left(\Asterisk_i H_i \right)\Asterisk F_{\infty}$, where $F_{\infty}=\langle f_{\Gamma}|\Gamma\in\Ga \rangle$ is an infinitely generated free group. The group $S$ is ordered as a free product of ordered groups; the order on $S$ extends the orders on each $H_i$ and is necessarily dense since the center of $S$ is trivial. Indeed, if the order on $S$ is not dense then there is the smallest positive element $g\in S$. But then for any $h\in S$, not commuting with $g$, either $1<g^h<g$ or $1<g^{h^{-1}}<g$ holds.

\begin{remark}\label{sprime}
The action $\alpha:G\times(\Ga\times S)\to \Ga\times S$ obviously extends to the action $\alpha^{\prime}:G\times(\Ga\times S^{\prime})\to \Ga\times S^{\prime}$, for any $S^{\prime}>S$. The action $\alpha^{\prime}$ is given by
\begin{equation}\label{actionalphaprime}
    \alpha^{\prime}(g,(\Gamma,s))=(\alpha_0(g,\Gamma),\alpha^{\prime}_{\Gamma}(g,s)),\quad g\in G, \Gamma\in\Ga, s\in S.
\end{equation}
Thus, in this construction, we can replace $S$ with any countable ordered dense group $S^{\prime}>S$.
\end{remark}

We set
\begin{equation}\label{s}
s_{g,\Gamma}\coloneqq  f_{(\Gamma)^g}\left( h_{g,\Gamma} \Gamma_i\right) f^{-1}_{\Gamma},\quad (g,\Gamma)\in G\times \0_i.
\end{equation}

It remains to verify that the elements $s_{g,\Gamma}$ satisfy the conditions \ref{c3prime}-\ref{c5prime}.
\begin{enumerate}[label=\arabic*\textprime.,start=3]
    \item \label{c3prime2} ($s_{g,\Gamma}=1$ if and only if $g\in \Gamma$).
    
    If $(\Gamma)^g\neq \Gamma$ then $s_{g,\Gamma}\neq 1$ since $f_{(\Gamma)^g}\neq f_{\Gamma}$. In this case $g\notin\Gamma$.
    Assume that $(\Gamma)^g=\Gamma$. Then $s_{g,\Gamma}=1$ is equivalent to $h_{g,\Gamma}=g^{h_{\Gamma}}\in\Gamma_i$ or $g\in(\Gamma_i)^{h^{-1}_{\Gamma}}=\Gamma$. So $s_{g,\Gamma}=1$ if and only if $g\in \Gamma$

    \item ($s_{g,\Gamma_g}>1$ for $g>1$). 
    
    Since $(\Gamma_g)^g=\Gamma_g$ we have $s_{g,\Gamma_g}=\left(h_{g,\Gamma_g}\Gamma_i\right)^{f_{\Gamma_g}}>1$ when $h_{g,\Gamma_g}\Gamma_i>1$ in $H_i$. 
    
    Note that $g\notin \Gamma_g$. So by \ref{c3prime2} $s_{g,\Gamma_g}\neq1$ and $h_{g,\Gamma_g}\Gamma_i\neq1$. 
    
    Then $h_{g,\Gamma_g}\Gamma_i>1$ in $H_i$ if $h_{g,\Gamma_g}>1$ in $G$. But $h_{g,\Gamma_g}=g^{h_{\Gamma_g}}>1$ as $g>1$.

    \item ($s_{g_1,\left(\Gamma\right)^{g_2}}\cdot s_{g_2,\Gamma}=s_{g_1g_2,\Gamma}$).
    
    We have
    \begin{align*}
    s_{g_1,\left(\Gamma\right)^{g_2}}\cdot s_{g_2,\Gamma}&=\left(f_{\left((\Gamma)^{g_2}\right)^{g_1}}\left( h_{g_1,(\Gamma)^{g_2}} \Gamma_i\right) f^{-1}_{(\Gamma)^{g_2}} \right)\cdot\left( f_{(\Gamma)^{g_2}}\left( h_{g_2,\Gamma} \Gamma_i\right) f^{-1}_{\Gamma}\right)\\
    &=f_{\left((\Gamma)^{g_2}\right)^{g_1}}\left( h_{g_1,(\Gamma)^{g_2}} \Gamma_i\right)\left( h_{g_2,\Gamma} \Gamma_i\right) f^{-1}_{\Gamma}\\
    &=f_{(\Gamma)^{g_1g_2}}\left( h_{g_1,(\Gamma)^{g_2}}\cdot h_{g_2,\Gamma} \Gamma_i\right) f^{-1}_{\Gamma}\\
    &=f_{(\Gamma)^{g_1g_2}}\left( h_{\left((\Gamma)^{g_2}\right)^{g_1}} g_1 h_{(\Gamma)^{g_2}}^{-1} \cdot h_{(\Gamma)^{g_2}}g_2 h_{\Gamma}^{-1} \Gamma_i\right) f^{-1}_{\Gamma}\\
    &=f_{(\Gamma)^{g_1g_2}}\left( h_{(\Gamma)^{g_1g_2}} (g_1g_2) h_{\Gamma}^{-1} \Gamma_i\right) f^{-1}_{\Gamma}\\
    &=s_{g_1g_2,\Gamma}.
    \end{align*}

\end{enumerate}

\end{proof}

Although the dynamical realization constructed in Theorem \ref{dynre} is sufficient for the purposes of this paper, it may be inconvenient for others. For example, the actions on $\qq$ are not continuous (if $\qq$ is granted with the standard topology).
Therefore we construct an alternative dynamical realization, with an action on $\R$, similarly to the standard dynamical realization of left-ordered groups.
We will show that every countable ordered group embeds into $\h$ granted with an order defined below.

Let a set $P\subset \h$ be given by
\begin{equation}\label{parposcon}
    P\coloneqq \{f\in\h\mid \sup\{x:f(x)>x\}>\sup\{x:f(x)<x\}\}.
\end{equation}
Here we consider the supremum of the empty set to be $-\infty$. 

It is easy to see that $P$ satisfies the following properties:
\begin{enumerate}
\item $P\cdot P\subset P$;
\item $f P f^{-1}\subset P$, for every $f\in \h$;
\item $P\cap P^{-1}=\emptyset$.
\end{enumerate}
So, $P$ defines a partial order on $\h$ given by $f<g$ when $f^{-1}g\in P$.

\begin{theorem}\label{dynre2}
Every countable ordered group $G$ embeds into $\h$ taken with the partial order define by the positive cone $P$ given by $\eqref{parposcon}$.
\end{theorem}
\begin{proof}
By Theorem \ref{dynre} there is a special action $\alpha$ of $G$ on $\qq$.

Consider $\qq$ with lexicographic order. Then it is an unbounded dense countable set, therefore, by Cantor's back and forth argument \cite{cantorbf}, it is order equivalent to $\Q$. Let $t:\qq\to\Q$ be an order preserving bijection. For each $g\in G$ we define the map $\rho(g):\R\to\R$ firstly on $\Q$ by the rule
\begin{equation*}
    \rho(g)(t(q,r))=t(\alpha(g,(q,r))),
\end{equation*}
then we extend it continuously to the action on $\R$.

Then $g\mapsto \rho(g)$ is the embedding of $G$ into $\h$.
\end{proof}

\begin{remark}
The order on $G$ may be considered to be a left order. Then the standard dynamical left-ordered realization $\rho_l:G\to \h$ of the bi-ordered group $G$ proves Theorem \ref{dynre2}. In fact, for every $g>1$ in $G$, the graph of $\rho_l(g)$ is above the line $y=x$. Therefore, we have $\sup\{x:\rho_l(g)(x)>x\}=+\infty$ and $\sup\{x:\rho_l(g)(x)<x\}=-\infty$.

However, the properties of $\rho$ and $\rho_l$ are quite different.
\end{remark}

Consider a layer $q\times\Q$. It corresponds to rational points on the interval $I_q\coloneqq (\inf\{t(q,r):r\in\Q\},\sup\{t(q,r):r\in\Q\})$. Let $\I\coloneqq \{I_q:q\in\Q\}$ be the set of all such intervals. Then it is easy to see that
\begin{enumerate}[label=(\alph*)]
    \item for any $g\in G$ and $I\in\I$ we have $\rho(g)(I)\in\I$;
    \item for any $g\in G$ and $I\in\I$, $\rho(g)(x)\geq x$ or $\rho(g)(x)\leq x$ holds simultaneously for $x\in I$;
    \item for any $g\neq 1$ there is an interval $I_g=I_{q_g}=(p,q)\in\I$, such that $\rho(g)(x)=x$ for $x>q$;
    \item for any $g\neq 1$ and any $I\leq I_g$ there is $x\in I$ such that $\rho(g)(x)\neq x$;
    \item for any $g>1$ and any $x\in I_g$, $g(x)\geq x$.
\end{enumerate}

\section{Changing the order}

\label{sec:5}

In this section we prove our main result Theorem \ref{main}. That is,
any given order $<$ on $F_2=\langle a,b \rangle$ is not isolated in $O(F_2)$. For any collection of positive elements $g_1,\dots,g_n>1$ we need to construct a new order $\prec\neq<$, such that $g_1,\dots,g_n\succ1$. 

Let $F_2$ embeds into a strongly dense countable group $G$, and the order $<$ on $G$ extends the order $<$ on $F_2$. Everywhere in this section, we let $\Gamma_g=\Gamma_g(G,<)$ and $\Ga=\Ga(G,<)$ are considered with respect to the group $G$ and order $<$ on it. We consider the dynamical realization of $G$.

Recall that the dynamical realization was constructed in Section \ref{sec:4} using the action $\alpha$ of $G>F_2$ on $\Ga\times S$. By Remark \ref{sprime} we may replace the group $S$ with any countable ordered dense supgroup $S^{\prime}>S$ in this construction. We consider $S^{\prime}=S\ast F_{\8}^{(a)}\ast F_{\8}^{(b)}$, where $F^{(a)}_{\8}=\langle f_{a,\Gamma}\mid \Gamma\in\Ga\rangle$, $F^{(b)}_{\8}=\langle f_{b,\Gamma}\mid \Gamma\in\Ga \rangle$ are infinitely generated free groups. 
The group $S^{\prime}$ is ordered since it is a free product of ordered groups, and we choose an order on $S^{\prime}$ to extend the order on $S$. Moreover, this order is dense since the center of $S^{\prime}$ is trivial.
We will construct a family of alternative actions of $F_2$ on $\Ga\times S^{\prime}$, where $S^{\prime}$ is a countable ordered extension of $S$.

The action of $\alpha^{\prime}$ was constructed through the (conjugation) action $\alpha_0$ of $G$ on $\Ga$ and a collection of (left multiplications by $s_{g,\Gamma}$) actions $\left\{\alpha^{\prime}_{\Gamma}\right\}_{\Gamma\in\Ga}$ on $S^{\prime}$.
Namely,
\begin{equation*}
g\cdot(\Gamma,s)=\left((\Gamma)^g,s_{g,\Gamma} s\right),\quad g\in F_2,\Gamma\in\Ga,s\in S^{\prime}.
\end{equation*}

For simplicity, we will write $\alpha$ instead of $\alpha^{\prime}$.

Similarly to the construction in Theorem \ref{dynre} we build alternative actions $F_2$ on $\Ga\times S^{\prime}$. 
Similarly to $\alpha$, the new action $\beta$ is defined by an action $\beta_0$ of $G$ on $\Ga$, and a collection of actions $\left\{\beta_{\Gamma}\right\}_{\Gamma\in\Ga}$ on $S^{\prime}$.
Each action $\beta_{\Gamma}(g,\cdot)$ is the left multiplication by $s_{g,\Gamma}^{\prime}\in S^{\prime}$.

To construct the action $\beta$ of the group $F_2=\langle a,b\rangle$ we need to define the actions of its generators $a,b$.
We begin with defining the action $\beta_0$ on the set of convex subgroups $\Ga$.

Fix some convex subgroup $\Gamma_0\in\Ga$.
We define
\begin{equation*}
    \beta_0(c,\Gamma)=c\cdot\Gamma=\left(\Gamma\right)^c,\quad \Gamma\geq\Gamma_0\cup\left(\Gamma_0\right)^{c^{-1}},c\in\{a,a^{-1},b,b^{-1}\}.
\end{equation*}

In other words, $c\cdot\Gamma=(\Gamma)^c$ for $\Gamma_0\leq \left(\Gamma\right)^c\cap\Gamma$. We extend the action $\beta_0$ so that $\beta_0(a,\cdot)$ and $\beta_0(b,\cdot)$ are order preserving bijections $\Ga\to\Ga$. We can always extend these actions using back and forth argument. Moreover, for any $\Gamma<\Gamma_0\cup\left(\Gamma_0\right)^{c^{-1}}$ we can choose $c\cdot \Gamma$ to be any convex subgroup $<\left(\Gamma_0\cup\left(\Gamma_0\right)^{c^{-1}}\right)^c=\Gamma_0\cup\left(\Gamma_0\right)^c$, $c\in\{a,a^{-1},b,b^{-1}\}$.

Everywhere below $g\cdot \Gamma$ means $\beta_0(g,\Gamma)$ and $g\cdot(\Gamma,s)$ means $\beta(g,(\Gamma,s))$.

Let us now define the actions $\beta_{\Gamma}$. Recall that the action $\beta_{\Gamma}$ is a multiplication by $s^{\prime}_{g,\Gamma}\in S^{\prime}$. For $c\in\{a,b\}$ we define
\begin{equation*}
    s^{\prime}_{c,\Gamma}=\left\{\begin{array}{cc}
     s_{c,\Gamma},  & \; \Gamma> \Gamma_0\cup\left(\Gamma_0\right)^{c^{-1}}; \\
     f_{c,\Gamma},  & \;\Gamma\leq \Gamma_0\cup\left(\Gamma_0\right)^{c^{-1}}
    \end{array}\right.
\end{equation*}
and
\begin{equation*}
    s^{\prime}_{c^{-1},\Gamma}=\left(s^{\prime}_{c,c^{-1}\cdot\Gamma}\right)^{-1}=\left\{\begin{array}{cc}
     s^{-1}_{c,c^{-1}\cdot\Gamma},  & \; \Gamma> \Gamma_0\cup\left(\Gamma_0\right)^c; \\
     f^{-1}_{c,c^{-1}\cdot\Gamma},  & \;\Gamma\leq \Gamma_0\cup\left(\Gamma_0\right)^c.
    \end{array}\right.
\end{equation*}

We also denote $f_{c^{-1},\Gamma}\coloneqq f^{-1}_{c,c^{-1}\cdot \Gamma}$. Then for $c\in\{a,a^{-1},b,b^{-1}\}$
\begin{equation}\label{sorf}
     s^{\prime}_{c,\Gamma}=\left\{\begin{array}{cc}
     s_{c,\Gamma},  & \; \Gamma> \Gamma_0\cup\left(\Gamma_0\right)^{c^{-1}}; \\
     f_{c,\Gamma},  & \;\Gamma\leq \Gamma_0\cup\left(\Gamma_0\right)^{c^{-1}}
    \end{array}\right.
\end{equation}

In other words, the new actions $\beta_{\Gamma}(c,\cdot)$ are equal to the old actions $\alpha_{\Gamma}(c,\cdot)$ for sufficiently large $\Gamma$'s, and are multiplications by $f_{c,\Gamma}\in F^{(a)}_{\infty}\ast F^{(b)}_{\infty}<S^{\prime}$ for sufficiently small $\Gamma$'s.

For $g=c_n\dots c_1$, $c_i\in\{a,a^{-1},b,b^{-1}\}$, $i=1,\dots n$, we have
\begin{align*}
    g\cdot(\Gamma,s)&=c_n\cdot\dots\cdot c_2\cdot c_1\cdot(\Gamma,s)\\
    &=c_n\cdot\dots\cdot c_2\cdot(c_1\cdot\Gamma,s^{\prime}_{c_1}s)=\dots\\
    &=(c_n\cdot\dots \cdot c_1\cdot \Gamma,s^{\prime}_{c_n}\dots s^{\prime}_{c_1}s).
\end{align*}

We denote 
\begin{equation}\label{prodprime}
    s^{\prime}_{g,\Gamma}=s^{\prime}_{c_n,\Gamma_n}\dots s^{\prime}_{c_1,\Gamma_1}.
\end{equation}
Then we have
\begin{equation}\label{eqsprime}
g\cdot(\Gamma,s)=(g\cdot \Gamma,s^{\prime}_{g,\Gamma}s), \quad g\in F_2,\Gamma\in\Ga,s\in S.
\end{equation}

\begin{remark}
The constructed action $\beta$ depends on the choice of the order on $S^{\prime}$, the convex subgroup $\Gamma_0$, and the actions of $a$ and $b$ on the small convex subgroups.
\end{remark}

Since $\Ga$ and $S^{\prime}$ are countable dense sets, we can see $\beta$ as an action on $\qq$ instead of $\Ga\times S^{\prime}$.

\begin{theorem}\label{gd}
For any choice of the order on $S^{\prime}$, $\Gamma_0$, and any actions of $a$, $b$ on $\Ga$, the constructed as above action $\beta$ satisfies the conditions \ref{tc1}-\ref{tc3} from Theorem \ref{dynre}, and, therefore, $\beta$ defines some order $\prec$ on $F_2$. 
\end{theorem}

\begin{proof}
Recall that by \eqref{eqsprime}, for any $\Gamma\in \Ga$, we have $g\cdot(\Gamma,s)=(g\cdot \Gamma,s^{\prime}_{g,\Gamma}s)$.

Similarly to the proof of Theorem \ref{dynre} we will show that for any $g\in F_2\setminus\{1\}$ there is a convex subgroup $\Gamma_g^{\prime}$ such that $s_{g,\Gamma}^{\prime}\neq 1$ if and only if $\Gamma\leq \Gamma_g^{\prime}$, and $g\cdot\Gamma=\Gamma$ for $\Gamma>\Gamma_g^{\prime}$. Then the action $\beta$ defines the order $\prec$ on $F_2$, given by $g\succ 1$ when $s_{g,\Gamma_g^{\prime}}^{\prime}>1$ in $S^{\prime}$.

Recall that the initial order $<$ is given by $g>1$ when $s_{g,\Gamma_g}>1$ in $S^{\prime}$.

Consider $g=c_n\dots c_1\neq 1$, $c_i\in\{a,a^{-1},b,b^{-1}\}$.
Recall that then by $\eqref{prodprime}$
\begin{equation*}
     s^{\prime}_{g,\Gamma}=s^{\prime}_{c_n,\Gamma_n}\dots s^{\prime}_{c_1,\Gamma_1}.
\end{equation*}
In this product each $s^{\prime}_{c_i,\Gamma_i}$ is either $s_{c_i,\Gamma_i}$ or $f_{c_i,\Gamma_i}$. 
Let
\begin{equation*}
    g_i=c_{i-1}\dots c_1
\end{equation*}
be the word containing the last $i-1$ letters of the word $g$. In particular, $g_1=1$ is the trivial word. Note that $\Gamma_i=g_i\cdot \Gamma$. Recall that by \eqref{sorf}
\begin{equation*}
s_{c_i,\Gamma_i}^{\prime}=s_{c_i,\Gamma_i} \iff \Gamma_i>\Gamma_0\cup(\Gamma_0)^{c_i^{-1}} \iff \Gamma>g_i^{-1}\cdot\left(\Gamma_0\cup(\Gamma_0)^{c_i^{-1}}\right).
\end{equation*}

Let 
\begin{equation*}
\Gamma^{(i)}_g=g_i^{-1}\cdot\left(\Gamma_0\cup(\Gamma_0)^{c_i^{-1}}\right).
\end{equation*}
Then $\Gamma=\Gamma_g^{(i)}$ is the largest convex subgroup such that the $i^{\text{th}}$ from the right letter $s^{\prime}_{c_i,\Gamma_i}$ in the word $s^{\prime}_{g,\Gamma}$ is an $f$-letter (i.e. equal to $f_{c_i,\Gamma_i}$). 

Also, for a set of indexes  
\begin{equation*}
I=\{i_1,i_2,\dots,i_k\mid 1\leq i_1<i_2<\dots<i_k\leq n\}
\end{equation*}
we denote
\begin{equation*}
    g_I\coloneqq c_{i_k}\dots c_{i_1}
\end{equation*}
and
\begin{equation*}
\Gamma^{(I)}_g\coloneqq g_{i_1}^{-1}\cdot  \Gamma_{g_I}.
\end{equation*}

We claim that for $g\in F_2$ either $g=1$ or the convex subgroup $\Gamma_g^{\prime}$ is one of the $\Gamma_g^{(i)}$'s or $\Gamma_g^{(I)}$'s. Namely, the largest of them such that $s^{\prime}_{g,\Gamma}\neq1$. We prove this statement by induction on the length of $g$. The base case $g=1$ holds trivially. 

Let $g\neq 1$ be a reduced word and assume that for any shorter word $h\neq 1$ there exists a convex subgroup $\Gamma_h^{\prime}\in\{\Gamma^{(i)}_h,\Gamma_h^{(I)}\}$ such that $s_{h,\Gamma}^{\prime}\neq 1$ if and only if $\Gamma\leq \Gamma_h^{\prime}$, and $h\cdot\Gamma=\Gamma$ for $\Gamma>\Gamma_h^{\prime}$.

Let $\Gamma_{g,1}^{\prime}<\Gamma^{\prime}_{g,2}<\dots<\Gamma^{\prime}_{g,N}$ be the convex subgroups $\Gamma_g^{(i)}$'s and $\Gamma_g^{(I)}$'s ordered by inclusion.

Consider the case $\Gamma\leq\Gamma_{g,1}^{\prime}$.

Then $\Gamma\leq \Gamma_g^{(i)}$, $1\leq i\leq n$, and therefore 
\begin{equation*}
s^{\prime}_{g,\Gamma}= s^{\prime}_{c_n,\Gamma_n} \dots s^{\prime}_{c_1,\Gamma_1} =f_{c_n,\Gamma_n} \dots f_{c_1,\Gamma_1} 
\end{equation*}
is an $f$-word, i.e. every letter of it is an $f$-letter (an element of $F^{(a)}_{\infty}\ast F^{(a)}_{\infty}$). So $s^{\prime}_{g,\Gamma}\neq 1$ as a nontrivial reduced word in the free group $F_{\infty}^{(a)}\ast F_{\infty}^{(b)}$. In particular, $s_{g,\Gamma_{g,1}}^{\prime}\neq 1$.

Consider the case $\Gamma>\Gamma^{\prime}_{g,N}$. Then $\Gamma> \Gamma_g^{(i)}$, $1\leq i\leq n$, and therefore 
\begin{equation*}
s^{\prime}_{g,\Gamma}=s^{\prime}_{c_n,\Gamma_n}\dots s^{\prime}_{c_1,\Gamma_1}=s_{c_n,\Gamma_n}\dots s_{c_1,\Gamma_1}=s_{g,\Gamma}.
\end{equation*}
Also $\Gamma>\Gamma_g^{(I)}$ with $I=\{1,2,\dots,n\}$. So $s^{\prime}_{g,\Gamma}=s_{g,\Gamma}=1$.

It remains to consider $\Gamma\in(\Gamma^{\prime}_{g,1},\Gamma^{\prime}_{g,N}]$. If $s_{g,\Gamma}^{\prime}\neq 1$ for all $\Gamma\in(\Gamma^{\prime}_{g,1},\Gamma^{\prime}_{g,N}]$ then theorem is proven with $\Gamma_{g}^{\prime}=\Gamma^{\prime}_{g,N}$.
Let $\Gamma\in(\Gamma_{g,k}^{\prime},\Gamma_{g,k+1}^{\prime}]$ and $s^{\prime}_{g,\Gamma}=1$. We choose the smallest $k$ for which such $\Gamma$ exists. So $s_{g,\Gamma}^{\prime}\neq 1$ for all $\Gamma\in(\Gamma^{\prime}_{g,1},\Gamma^{\prime}_{g,k}]$.
We will show $\Gamma^{\prime}_g=\Gamma^{\prime}_{g,k}$.

In the word $s^{\prime}_{g,\Gamma}$ we combine consecutive $s$-letters and $f$-letters. Write
$s^{\prime}_{g,\Gamma}=s_1 f_1 s_2 f_2 \dots s_l f_l s_{l+1}$, where $s_i\in S$ and $f_j\in F^{(a)}_{\infty}\ast F^{(b)}_{\infty}$. All of $s_i$'s and $f_j$'s are nonempty words accept possibly $s_1$ and $s_{l+1}$. Note that all $f_j$'s are nontrivial as elements of the free group $F^{(a)}_{\infty}\ast F^{(b)}_{\infty}$. So $s_{g,\Gamma}^{\prime}=1$ is possible only if some $s_i=1$. Let 
\begin{equation*}
s_i=s^{\prime}_{c_r,\Gamma_r}\dots s^{\prime}_{c_t,\Gamma_t}=s_{c_r,\Gamma_r}\dots s_{c_t,\Gamma_t}=1.
\end{equation*}

Here $s_i$ is an $s$-subword so all its letter are $s$-letters. This means $\Gamma>\Gamma_g^{(j)}$, $j=r,\dots,t$. Also $s_{c_r,\Gamma_r}\dots s_{c_t,\Gamma_t}=1$ if and only if $\Gamma>\Gamma_g^{(I)}$ with $I=\{r,\dots,t\}$. Since $\Gamma\in(\Gamma^{\prime}_{g,k},\Gamma^{\prime}_{g,k+1}]$ we have $\Gamma^{\prime}_{g,k}\geq \min\{\Gamma^{(I)}_g,\Gamma^{(r)}_g,\dots,\Gamma^{(t)}_g\}$.

We write 
\begin{equation*}
    g=uvw,
\end{equation*}
where 
\begin{equation*}
   u=c_n\dots c_{r+1},\quad
    v=c_r\dots c_t,\quad\text{and}\quad
    w=c_{t-1}\dots c_1.
\end{equation*}

Then we have
\begin{equation}\label{subwordcor}
    s_i=s^{\prime}_{v,\Gamma_t}=s^{\prime}_{v,w\cdot\Gamma}=s_{v,w\cdot\Gamma}=1.
\end{equation}

Consider a convex subgroup $\widetilde{\Gamma}>\Gamma^{\prime}_{g,k}$ and let $\widetilde{s}_i=s^{\prime}_{c_r,\widetilde{\Gamma}_r}\dots s^{\prime}_{c_t,\widetilde{\Gamma}_t}$ be the subword of $s^{\prime}_{g,\widetilde{\Gamma}}$ whose letters are located at the same positions in $s^{\prime}_{g,\widetilde{\Gamma}}$ as the letters of $s_i$ in $s^{\prime}_{g,\Gamma}$.
Here $\widetilde{\Gamma}_1=\widetilde{\Gamma}$, $\widetilde{\Gamma}_{i+1}=c_i\cdot\widetilde{\Gamma}_i$.
Then, since $\widetilde{\Gamma}> \min\{\Gamma^{(r)}_g,\dots,\Gamma^{(t)}_g\}$, $\widetilde{s}_i$ is an $s$-word, and, since $\widetilde{\Gamma}>\Gamma_g^{(I)}$, $\widetilde{s}_i=1$. Similarly to \eqref{subwordcor} we have

\begin{equation*}
    \widetilde{s}_i=s^{\prime}_{v,\widetilde{\Gamma}_t}=s^{\prime}_{v,w\cdot\widetilde{\Gamma}}=s_{v,w\cdot\widetilde{\Gamma}}=1.
\end{equation*}

Note that we also have
\begin{equation*}
    v\cdot\left(w\cdot \widetilde{\Gamma}\right)=\left(w\cdot\widetilde{\Gamma}\right)^v=w\cdot\widetilde{\Gamma}.
\end{equation*}

Therefore 
\begin{equation}\label{fix}
    v\cdot\left(w\cdot \widetilde{\Gamma},s\right)=\left(w\cdot \widetilde{\Gamma},\widetilde{s}_is\right)=\left(w\cdot \widetilde{\Gamma},s\right)
\end{equation}
whenever $\widetilde{\Gamma}>\Gamma^{\prime}_{g,k}$.

Let $h$ be obtained from $g$ by removing the subword $v$. We have
\begin{equation*}
    h=(c_n\dots c_{r+1}) (c_{t-1}\dots c_1)=uw.
\end{equation*}

Note that the word $h$ is shorter than $g$. We claim that for $\widetilde{\Gamma}>\Gamma^{\prime}_{g,k}$ we have
\begin{equation*}
    h\cdot(\widetilde{\Gamma},s)= g\cdot(\widetilde{\Gamma},s). 
\end{equation*}

Consider $h^{-1}g=(uw)^{-1}uvw=w^{-1}vw$.
We have
\begin{equation*}
    h^{-1}g\cdot(\widetilde{\Gamma},s)=w^{-1}vw\cdot(\widetilde{\Gamma},s)=w^{-1}v\cdot\left(w\cdot \widetilde{\Gamma},s_{w,\widetilde{\Gamma}}s\right).
\end{equation*}

Using \eqref{fix} we get
\begin{equation*}
    h^{-1}g\cdot(\widetilde{\Gamma},s)=w^{-1}\cdot\left(w\cdot \widetilde{\Gamma},s_{w,\widetilde{\Gamma}}s\right)=(\widetilde{\Gamma},s).
\end{equation*}

Thus,
\begin{equation*}
    g\cdot(\widetilde{\Gamma},s)=h\cdot\left(h^{-1}g\cdot(\widetilde{\Gamma},s)\right)=h\cdot(\widetilde{\Gamma},s).
\end{equation*}

If $h=1$ we immediately have $s_{g,\widetilde{\Gamma}}^{\prime}=s_{1,\widetilde{\Gamma}}^{\prime}=1$ and $g\cdot\widetilde{\Gamma}=1\cdot\widetilde{\Gamma}=\widetilde{\Gamma}$ for all $\widetilde{\Gamma}>\Gamma_{g,k}^{\prime}$. Recall that $s_{g,{\Gamma}}^{\prime}\neq 1$ for $\Gamma\leq \Gamma_{g,k}^{\prime}$.
Therefore $\Gamma^{\prime}_g=\Gamma^{\prime}_{g,k}$.

Let $h\neq 1$. Then, after applying the inductive assumption, we obtain
$s_{g,\widetilde{\Gamma}}^{\prime}=s_{h,\widetilde{\Gamma}}^{\prime}=1$ and $g\cdot\widetilde{\Gamma}=h\cdot\widetilde{\Gamma}=\widetilde{\Gamma}$ for all $\widetilde{\Gamma}>\Gamma_{g,k}^{\prime}\cup\Gamma_{h}^{\prime}$.
Here $\Gamma_{h}^{\prime}=\Gamma_{h,m}^{\prime}$ for some $m$. Note that every $\Gamma_{h,i}^{\prime}$ is one of the $\Gamma_{g,j}^{\prime}$'s. And $s_{h,\Gamma}^{\prime}=1$ implies $\Gamma_{h}^{\prime}<\Gamma_{g,k+1}^{\prime}$. So $\Gamma_h^{\prime}\leq \Gamma_{g,k}^{\prime}$. Then  $\Gamma_{g,k}^{\prime}\cup\Gamma_{h}^{\prime}=\Gamma_{g,k}^{\prime}$. Again, $s_{g,\widetilde{\Gamma}}^{\prime}=1$ and $g\cdot\widetilde{\Gamma}=\widetilde{\Gamma}$ for all $\widetilde{\Gamma}>\Gamma_{g,k}^{\prime}$, so $\Gamma^{\prime}_g=\Gamma^{\prime}_{g,k}$.

\end{proof}

Now we can prove the main result of this paper.

\begin{proof}[Proof of Theorem \ref{main}]
We need to show that a given order $<$ on $F_2$ is not isolated. That is for any sequence of positive elements $g_1,\dots,g_k$ there is another order $\prec\neq<$ satisfying $g_1,\dots, g_k\succ1$.

We consider the order $\prec$ to be associated with the action $\beta$ as in Theorem \ref{gd}. We need to choose a convex subgroup $\Gamma_0\in\Ga$, an order on $S^{\prime}$, and an action $\beta_0$ of $F_2$ on $\Ga$. By choosing sufficiently small $\Gamma_0$, we guarantee $g_1,\dots, g_k\succ1$, and by choosing appropriate order on $S^{\prime}$ and action $\beta$, we make the new order $\prec$ different from the old order $<$.

For $g_i=c_{n_i}^{(i)}\dots c_{1}^{(i)}$ $i=1,\dots,k$, by $\eqref{prodprime}$ we have
\begin{equation*}
    s^{\prime}_{g_i,\Gamma_{g_i}}=s^{\prime}_{c_{n_i}^{(i)},\Gamma_{n_i}^{(i)}}\dots s^{\prime}_{c_1^{(i)},\Gamma_1^{(i)}},
\end{equation*}
$\Gamma_1^{(i)}=\Gamma_{g_i}$, $\Gamma_{j+1}^{(i)}=c_{j}^{(i)}\cdot\Gamma_j^{(i)}$, $j=1,\dots, n_i-1$. Then, if $\Gamma_0<\bigcap\limits_{j=1}^{n_i}\Gamma_j^{(i)}$, we have $\Gamma_{j+1}^{(i)}=c_{j}^{(i)}\cdot\Gamma_j^{(i)}=\left(\Gamma_j^{(i)}\right)^{c_j{(i)}}$ and $s^{\prime}_{c_j^{(i)}}=s_{c_j^{(i)}}$. So
\begin{equation*}
    s^{\prime}_{g_i,\Gamma_{g_i}}=s^{\prime}_{c_{n_i}^{(i)},\Gamma_{n_i}^{(i)}}\dots s^{\prime}_{c_1^{(i)},\Gamma_1^{(i)}}=s_{c_{n_i}^{(i)},\Gamma_{n_i}^{(i)}}\dots s_{c_1^{(i)},\Gamma_1^{(i)}}=s_{g_i,\Gamma_{g_i}}>1.
\end{equation*}
Similarly, for $\Gamma>\Gamma_{g_i}$, we have $s^{\prime}_{g_i,\Gamma}=s_{g_i,\Gamma}=1$. In this case $\Gamma^{\prime}_{g_i}=\Gamma_{g_i}$ and $g_i\succ 1$.

By choosing $\Gamma_0<\bigcap\limits_{i=1}^k\bigcap\limits_{j=1}^{n_i}\Gamma_j^{(i)}$ we get $g_1,\dots, g_k\succ1$ for any order on $S^{\prime}$ and any action $\beta_0$. 

By Corollary \ref{nosmallfree} there is a non-trivial element $h\in F_2\cap \bigcap\limits_{i=1}^k\bigcap\limits_{j=1}^{n_i}\Gamma_j^{(i)}$. With out loss of generality we may assume $h>1$. Let $h=c_m\dots c_1$.
Consider two cases:

Case 1: $\Gamma_h=\left(\Gamma_h\right)^a=\left(\Gamma_h\right)^b$.

In this case $\Gamma_h$ is a normal convex subgroup of $F_2$. We obtain the new order $\prec$ by reversing the signs of elements of $\Gamma_h$. In other words, the new order $\prec$ is generated by the positive cone
\begin{equation*}
    P_{\prec}=\left(P_<\setminus\Gamma_h\right)\cup\left(P_<^{-1}\cap\Gamma_h\right),
\end{equation*}
where $P_<$ is the positive cone of the order $<$.

Clearly, this doesn't affect the signs of $g_1,\dots, g_k$. Also, the new order $\prec$ is different from the old order $<$ since $\Gamma_h$ is nontrivial by Lemma \ref{nosmall}.




Case 2: $\Gamma_h\neq \left(\Gamma_h\right)^a$ or $\Gamma_h\neq \left(\Gamma_h\right)^b$. 

Consider a total left preorder $\leq_h$ given by $x\leq_h y$ when $(\Gamma_h)^x\leq (\Gamma_h)^y$. Recall that a preorder is a reflexive and transitive relation for which $x\leq y$ and $y\leq x$ may hold simultaneously. Note that the left preorder $\leq_h$ is completely determined by the action $\alpha_0$ and the convex subgroup $\Gamma_h$. Namely, $x\leq_h y$ if and only if $\alpha_0(x,\Gamma_h)=(\Gamma_h)^x\leq(\Gamma_h)^y=\alpha_0(y,\Gamma_h)$. The similarly defined left preorder $\preceq_h$ for an order $\prec$ described in Theorem \ref{gd} depends only on the action $\beta_0$ and the convex subgroup $\Gamma_h^{\prime}$. 

In order to change the order $<$, we change the induced left preorder $\leq_h$. First, we choose sufficiently small $\Gamma_0$ so that $\Gamma_h=\Gamma^{\prime}_h$. By the above discussion it is sufficient to take $\Gamma_0<\bigcap\limits_{j=1}^n\Gamma_h^{(j)}$. Then, we change the left preorder $\leq_h$ by changing the action $\alpha_0$. We will use the method similar to the argument for showing that free products don’t admit isolated left-orders \cite{Rivas}. For its adopted version for the free group $F_2$ see \cite[Theorem 10.15]{CR15}.

We construct sequences $d_1,d_2,\dots$ and $x_1,x_2,\dots$ as following:

Let $x_1=d_1\in\{a,a^{-1},b,b^{-1}\}$ be the minimal letter with respect to $\leq_h$. We choose $d_i\in\{a,a^{-1},b,b^{-1}\}$ to minimize $x_i\coloneqq d_i x_{i-1}$, $i=2,3,\dots$ with respect to $\leq_h$. Note that $x_i$ is a minimal (but not necessarily the smallest) word of length $i$ in $\leq_h$.
Equivalently, $x_i$ minimizes $\alpha_0(x_i,\Gamma_h)$. Note that because $\Gamma_h$ is not normal in $F_2$ we have $1>_h x_i>_h x_2>_h \dots$

Let $\Gamma_0=\left(\Gamma_{h}\right)^{x_m}$. Then, $\Gamma_0<\left(\Gamma_{h}\right)^x$, for any word $x$ of length less than $m$. In particular, $\Gamma_0< \Gamma_h^{(j)}$, $j=1,\dots,m$. Thus, for an order $\prec$ as in Theorem \ref{gd}, we have $\Gamma_h^{\prime}=\Gamma_h$. Also, since $\Gamma_0<\Gamma_h<\bigcap\limits_{i=1}^k\bigcap\limits_{j=1}^{n_i}\Gamma_j^{(i)}$,  $g_1,\dots, g_k\succ1$ holds.

Let $d=d_{m+1}$ and $d^{\prime}\in\{a,a^{-1},b,b^{-1}\}\setminus\{d,d^{-1}\}$ be such that $x_{m+1}=d x_m\leq_h d^{\prime}x_m\leq_h x_m$. Then, since $x_{m+1}<_hx_m$ we have
\begin{equation*}
    x_{m+1}<_h \max\{c^{-1}x_m,x_m\}, \;\; c=d,d^{\prime}.
\end{equation*}
In terms of the conjugation action, for $\Gamma=(\Gamma_h)^{x_{m+1}}$ this means
\begin{equation*}
    \Gamma<\Gamma_0\cup\left(\Gamma_0\right)^c, \;\; c=d,d^{\prime}.
\end{equation*}
Therefore, for the new action $\beta_0$, we may choose $\beta(d,\Gamma)$ and $\beta(d^{\prime},\Gamma)$ to be any sufficiently small (for instance, $<\Gamma$) convex subgroups.

We build $\beta_0$ so that $\beta_0(d_1,\Gamma_h)=\alpha_0(d_1,\Gamma_h)=(\Gamma_h)^{x_1}$,  $\beta_0(d_2,(\Gamma_h)^{x_1})=\alpha_0(d_2,(\Gamma_h)^{x_1})=(\Gamma_h)^{x_2}$, $\beta_0(d_3,(\Gamma_h)^{x_2})=\alpha_0(d_3,(\Gamma_h)^{x_2})=(\Gamma_h)^{x_3}$, \dots, $\beta_0(d_m,(\Gamma_h)^{x_{m-1}})=\alpha_0(d_m,(\Gamma_h)^{x_{m-1}})=(\Gamma_h)^{x_m}$, and $\beta_0(d,(\Gamma_h)^{x_m})>\beta_0(d^{\prime},(\Gamma_h)^{x_m})$.
In terms of the left preorder $\preceq_h$ the last means $dx_m\succ_h d^{\prime} x_m$ while in the left preorder $\leq_h$ we have $dx_m\leq_h d^{\prime} x_m$. Clearly, then the left preorders $\leq_h$ and $\preceq_h$ are different, and, therefore the orders $<$ and $\prec$ are different.

\begin{remark}
In terms of the order $<$ for a positive $h$ the condition $dx_m\leq_h d^{\prime} x_m$ means 
\begin{equation}\label{oinc}
    h^{dx_m}<\left(h^{d^{\prime}x_m}\right)^n
\end{equation}
for sufficiently large $n\in\N$. In the order $\prec$ the inequality \eqref{oinc} is reversed. 
\end{remark}

So we can take the order $\prec$ different from the order $<$, but still satisfying $g_1,\dots, g_k\succ1$. The order $<$ is not isolated in $O(F_2)$.

\end{proof}

\begin{corollary}
The space $O(F_2)$ is homeomorphic to the Cantor set.
\end{corollary}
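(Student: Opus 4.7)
The plan is to invoke Brouwer's classical topological characterization of the Cantor set: any nonempty, compact, totally disconnected, metrizable topological space without isolated points is homeomorphic to the Cantor set. So I would check each of these four hypotheses for $O(F_2)$ and then quote Brouwer's theorem.

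First, $O(F_2)$ is nonempty, since $F_2$ is orderable as a free product of copies of $\Z$ (by Proposition \ref{family of left-orderable groups is closed under}). Next, by Theorem \ref{first}, the space $LO(F_2)$ is a compact, totally disconnected, Hausdorff, metrizable space, and $O(F_2)$ is a closed subset of $LO(F_2)$. A closed subspace of a compact Hausdorff space is compact, and the properties of being totally disconnected and metrizable are inherited by arbitrary subspaces; so $O(F_2)$ itself is compact, totally disconnected, Hausdorff, and metrizable. Finally, Theorem \ref{main} gives that $O(F_2)$ has no isolated points, which was the content of the bulk of the paper.

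With all four hypotheses verified, Brouwer's characterization theorem applies and gives a homeomorphism between $O(F_2)$ and the Cantor set. There is no real obstacle here: the heavy lifting was already done in Theorem \ref{main}, and the remaining input is a standard topological fact that the authors essentially flagged in the introduction when they wrote that $O(G)$ is homeomorphic to the Cantor set if and only if it has no isolated points. Hence the corollary is a one-line consequence of Theorem \ref{main} combined with Theorem \ref{first}.
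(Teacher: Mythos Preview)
Your proof is correct and matches the paper's intended argument exactly: the paper gives no explicit proof of this corollary, treating it as immediate from Theorem~\ref{main} together with Theorem~\ref{first} and Brouwer's characterization of the Cantor set, precisely as flagged in the introduction. You have simply spelled out the details the authors left implicit.
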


\bibliographystyle{plain}
\end{document}